\newcommand{\bfb}{\mathbf{b}}
\newcommand{\bfP}{\mathbf{P}}
\newcommand{\R}{\mathbb{R}}
\newcommand{\lb}{\left(}
\newcommand{\rb}{\right)}
\newcommand{\Var}{\operatorname{Var}}
\newcommand{\cov}{\operatorname{Cov}}
\newcommand{\E}{\mathbb{E}}
\newcommand{\N}{\mathbb{N}}
\newcommand{\bfSigmahat}{	\hat{ \mathbf{\Sigma} }}
\newcommand{\bfIhat}{\hat{ \mathbf{I} }}
\newcommand{\bfR}{\mathbf{R}}
\newcommand{\bfQ}{\mathbf{Q}}
\newcommand{\bfSigma}{\mathbf{\Sigma}}
\newcommand{\bfX}{\mathbf{X}}
\newcommand{\bfI}{\mathbf{I}}
\newcommand{\bfY}{\mathbf{Y}}
\newcommand{\cond}{\stackrel{\mathcal{D}}{\to}}
\newcommand{\conp}{\stackrel{\mathbb{P}}{\to}}
\newcommand{\tr}{\operatorname{tr}}
\newcommand{\inv}{^{-1}}
\newcommand{\sq}{^{\frac{1}{2}}}
\newcommand{\PR}{\mathbb{P}}
\newcommand{\bfx}{\mathbf{x}}
\newcommand{\proj}{\operatorname{proj}}
\newtheorem{theorem}{Theorem}[section]
\newtheorem{lemma}{Lemma}[section]
\newtheorem{corollary}{Corollary}[section]
\newtheorem{remark}{Remark}[section]
\numberwithin{equation}{section}
\begin{document}
\title{Fluctuations of the diagonal entries of a large sample precision matrix}
\date{\today}
\author{Nina Dörnemann, Holger Dette}
\maketitle

\begin{abstract}
For a given $p\times n$ data matrix $\bfX_n$ with i.i.d. centered entries and a population covariance matrix $\bfSigma$, the corresponding sample precision matrix $\hat\bfSigma\inv$ is defined as the inverse of the sample covariance matrix $\hat\bfSigma = (1/n) \bfSigma^{1/2} \bfX_n \bfX_n^\top \bfSigma^{1/2}$.  We determine the joint distribution of a
vector of diagonal  entries of the matrix  $\hat\bfSigma\inv$ in the situation, where $p_n=p< n$, $p/n \to y \in [0,1)$ for $n\to\infty$ and $\bfSigma$ is a diagonal matrix. Remarkably, our results cover both the case where the dimension is negligible in comparison to the sample size and the case where it is of the same magnitude. %das steht da eigentlich schon? 
% Our approach is based on a representation of the diagonal elements by a difference of two closely related linear spectral statistics.
  Our approach  is based on a QR-decomposition of the data matrix, yielding a connection to random quadratic forms and allowing the application of a central limit theorem. Moreover, we discuss an interesting connection to linear spectral statistics of the sample covariance matrix. More precisely, the logarithmic diagonal entry of the sample precision matrix can be interpreted as a difference of two highly dependent linear spectral statistics of $\hat\bfSigma$ and a submatrix of $\hat\bfSigma$. This difference of spectral statistics fluctuates on a much smaller scale than each single statistic.  
\end{abstract}
Keywords: central limit theorem, random matrix theory, sample precision matrix\\
AMS subject classification: 60B20, 60F05\\

\section{Introduction} \label{sec_chap9}

Many statistical problems as they occur in biology or finance demand estimates of the covariance matrix or its inverse, for which the sample precision matrix is a popular choice. Spurred by the groundbreaking advances of data collecting devices, these  applications nowadays call for analysis tools of high-dimensional data sets \citep[see, e.g.,][and references therein]{Fan2006, Johnstone2006}. Moreover, they motivate the investigation of the probabilistic properties of large sample covariance or
precision matrices, where the dimension of the data  and  the sample size are of the same order.
In the last decades, the scientific interest was mainly focused on the probabilistic  properties of the spectrum of the  sample covariance matrix. Since the pioneering work of \cite{mp1967} on the empirical spectral distribution of $\hat\bfSigma$ for the case  $p/n \to y\in(0,\infty)$, the asymptotic behavior of its eigenvalues and eigenvalue statistics
has been studied by numerous authors. For example, we mention the works of \cite{baiyin1988} on the limiting spectral distribution in the case $y=0$, \cite{johnson1982}, \cite{baisilverstein2004}, \cite{ zheng_et_al_2015}, \cite{najimyao2016}
on linear spectral statistics, \cite{baik2006eigenvalues} on the eigenvalues of spiked population models,
and of \cite{Johnstone2001}, \cite{bai2008limit}  on the extreme eigenvalues of $\hat\bfSigma$, to name just a few. Some of these results can be used for the study of the spectrum or spectral statistics of $\hat\bfSigma\inv.$   %war vorher weiter unten
%Using techniques from random matrix theory
Furthermore, \cite{zheng2015clt} established a central limit theorem for linear spectral statistics of a rescaled version of the sample precision matrix. In the case
where the dimension exceeds the sample size, 
\cite {bodnar2016spectral}
investigated  the asymptotic properties of  linear spectral statistics of the Moore-Penrose inverse of the sample covariance matrix. 

From a statistical point of view, the sample precision matrix plays a vital role in the analysis of high-dimensional  linear models.
%, as it is proportional to the conditional variance of the least-squares estimator (provided that the errors are independent and homoscedastic and there is no intercept in the model).
In particular, the diagonal elements of  the matrix  $\hat\bfSigma\inv$ are proportional to the conditional variances of the least squares estimator of the individual coefficients in the linear model (provided that the errors are independent and homoscedastic and there is no intercept in the model). 
% The analysis of the fluctuation of 
% the diagonal entries of the sample precision matrix is very  challenging as  the structure of the inverse is subtle, which demands sophisticated tools for the analysis of its diagonal entries
% as there exists no approachable 
% representation of the entries of the sample precision matrix as a function of the data.
Under the additional assumption of  a multivariate normal distribution, the exact distribution of $(\mathbf{\hat \Sigma}\inv)_{qr}$ is well-understood  for fixed dimension and sample size ($1 \leq q,r \leq p$). In fact, $n\inv \hat\bfSigma\inv $ follows an inverse Wishart distribution  \citep[see][for more details] {von1988moments,nydick2012wishart,gupta2018matrix}.
Apart from this, the asymptotic properties of 
$(\mathbf{\hat \Sigma}\inv)_{qq}$ for 
non-normal distributed data and a dimension growing with the sample sizes are not well understood so far.
We add to this line of research by establishing a central limit theorem for the diagonal entries of a large sample precision matrix.
Our approach is based on a consequence of Cramer's rule 
$
	(\hat\bfSigma\inv)_{qq} = | \hat\bfSigma^{(-q)} | / | \hat\bfSigma | , ~ 1 \leq q \leq p,
$
% \begin{align*}
% 	(\hat\bfSigma\inv)_{qq} = \frac{| \hat\bfSigma^{(-q)} | }{| \hat\bfSigma | }, ~ 1 \leq q \leq p,
% \end{align*}
where $\mathbf{\hat \Sigma}^{(-q)}$ denotes the $(p-1) \times (p-1)$ submatrix of $\mathbf{\hat \Sigma}$ with the $q$th row and $q$th column being deleted. This representation reveals an explicit connection to a random quadratic form, which is shown to satisfy a central limit theorem.  
Moreover, we also observe an immediate connection to linear spectral statistics of sample covariance matrices: the  logarithm  of the $q$th diagonal entry $\log (\mathbf{\hat \Sigma}\inv)_{qq}$
% \begin{align} \label{diff_log2}
% 	\log (\mathbf{\hat \Sigma}\inv)_{qq} = \log | \mathbf{\hat\Sigma}^{(-q)} | - \log | \mathbf{\hat\Sigma} |
% \end{align}
is  a difference of  two linear spectral statistics of $\hat\bfSigma$ and its submatrix $\hat\bfSigma^{(-q)} \in \R^{(p - 1) \times (p - 1)}$.
However, due to the strong dependence between the eigenvalues of $\hat{\mathbf{\Sigma}} $ and $\hat{\mathbf{\Sigma}}^{(-q)}$, the asymptotic behavior of this statistic cannot be described by  the meanwhile classical CLT  of \cite{baisilverstein2004} or one of the many follow-up works.  Interestingly, the difference of spectral statistics fluctuates on a scale $1/\sqrt{n}$ which is of significantly smaller order than the fluctuations of each single linear spectral statistic $\log | \hat{\mathbf{\Sigma}} |$ and $\log | \hat{\mathbf{\Sigma}}^{(-q)} | $. 
More precisely, after appropriate normalization, a finite-dimensional vector of diagonal entries follows a multivariate normal distribution.
 Similarly  to linear spectral statistics of the sample covariance matrix, the limiting variance of $(\mathbf{\hat \Sigma}\inv)_{qq}$ is determined by the fourth moment of the underlying data generating distribution.
%The work most similar in spirit to ours  is \cite{erdoes}

 %Despite  its  importance, the literature on the probabilistic properties of 
% %the spectrum or 
 %the diagonal elements of  the sample precision matrix in the high-dimensional paradigm is more scarce. 
We conclude, mentioning that many authors have investigated the fluctuations of the entries of different types of random matrices or functions of random matrices. Exemplary, we mention  the work \cite{lytova2009fluctuations} on Gaussian random matrices, \cite{pizzo2012fluctuations, o2013fluctuations} on Wigner matrices and \cite{o2014fluctuations} on sample covariance matrices.  
A more closely related work to the results presented here is \cite{erdoes},
who considered  linear spectral statistics 
of the  sample covariance matrix and its minor from 
i.i.d. data with finite moments of any order. Choosing  the function $\log(x)$ in their main result and combining this with the delta method  gives a CLT for a single diagonal entry
of the sample precision matrix. 
In contrast to the work of these authors, our approach requires  only the existence of the fourth moment 
and also allows a proof of the weak 
convergence of a vector of diagonal entries
of the precision matrix.

The remaining part of this paper is organized as follows.
A CLT for a single diagonal entry is given in Section \ref{sec_vary} and is afterwards generalized to the joint convergence of several diagonal entries. All proofs of our main results are provided in Section \ref{sec_proof_vary} and Section \ref{sec_proof_joint_conv}. In Section \ref{sec_conclusion}, we give an outlook to future work concerning the sample precision matrix. Finally, 
Section \ref{sec_qr_decomp} in the Appendix sheds some
light on the QR-decomposition of the data matrix, which is an important tool used in the proofs.

	\section{A CLT for diagonal entries of the empirical precision matrix  } \label{sec_vary}
Throughout this paper, let 
\begin{align}
\mathbf{X}_n 
	=(x_{ij})_{\substack{i=1,...,p \\ j=1,...,n}}  
%	= (\mathbf{b}_1, \ldots, \mathbf{b}_p)^\top = (\mathbf{x}_1, \ldots, \mathbf{x}_n)
	\in \R^{p \times n}
	    \label{hol1}
\end{align}
denote  a random  $p  \times n$ matrix with i.i.d. centered entries having  a continuous distribution, $\mathbf{\Sigma} = \mathbf{\Sigma}_n \in \R^{p\times p}$ nonrandom and (symmetric) positive definite matrix with symmetric square root $\bfSigma^{1/2}$.  The matrix $\bfSigma$ denotes the population covariance matrix and for most of the following results, it is assumed to be a diagonal matrix (except for the normal case). We denote the sample covariance matrix by 
	\begin{align*}
	    \hat\bfSigma = \frac{1}{n} \bfSigma^{\frac{1}{2}} \bfX_n \bfX_n^\top \bfSigma^{\frac{1}{2}} \in \R^{p\times p}.
	\end{align*}
	If $p<n$, the inverse matrix $\hat\bfSigma\inv$ is almost surely well-defined and called the sample precision matrix. 
	We are now in the position to formulate the first  main result of this section. 

	\begin{theorem}[CLT for diagonal entries of full-sample precision matrix]\label{thm_prec}
	Let $\bfSigma \in\R^{p\times p}$ be a diagonal matrix with positive diagonal entries. 
	Assume that the random variables $\{ x_{ij} ~|~ 1 \leq i \leq p, ~ 1 \leq j \leq n \} $
	in \eqref{hol1}    
	are i.i.d. with continuous distribution, 
 $\E[x_{11}]=0, ~\Var (x_{11})=1$ and $\E[x_{11}^4] = \nu_4 < \infty$. Let $p/n \to y \in [0,1)$ for $n\to\infty$. 
	Then, it holds for $n\to \infty$ and $q\in\{1, \ldots, p\}$
		\begin{align*}
		\frac{ \sqrt{n - p + 1} }{ \lb \mathbf{\Sigma} \inv \rb_{qq} } \lb 
			 \frac{n - p + 1}{n} \lb \mathbf{\hat{\Sigma}} \inv \rb_{qq} - \lb \mathbf{\Sigma} \inv \rb_{qq}
			\rb 
		\cond \mathcal{N} ( 0, \rho ),~ n \to\infty,
	\end{align*}	
	where the asymptotic variance is given by $\rho = 2 + (\nu_4 - 3)	(1-y)$.
	\end{theorem}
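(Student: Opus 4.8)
The plan is to reduce the diagonal entry $(\hat\bfSigma\inv)_{qq}$ to a single random quadratic form and then establish a central limit theorem for it. Write $\bfx_q^\top$ for the $q$th row of $\bfX_n$ and let $\bfX^{(-q)}\in\R^{(p-1)\times n}$ be the matrix obtained by deleting this row. Because $\bfSigma=\operatorname{diag}(\sigma_1,\dots,\sigma_p)$ is diagonal, a Schur-complement computation (equivalently, the Cramer's rule identity quoted in the introduction) shows that all factors $\bfSigma^{1/2}$ attached to the remaining rows cancel, leaving
\begin{align*}
\frac{1}{(\hat\bfSigma\inv)_{qq}} = \hat\bfSigma_{qq} - \hat\bfSigma_{q,\cdot}\,(\hat\bfSigma^{(-q)})\inv\,\hat\bfSigma_{\cdot,q} = \frac{\sigma_q}{n}\,\bfx_q^\top(\bfI_n-\bfP)\,\bfx_q,
\end{align*}
where $\bfP=(\bfX^{(-q)})^\top\big(\bfX^{(-q)}(\bfX^{(-q)})^\top\big)\inv\bfX^{(-q)}$ is the orthogonal projection of rank $p-1$ onto the row space of $\bfX^{(-q)}$. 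This is exactly where the QR-decomposition of $(\bfX^{(-q)})^\top$ enters: it produces an $n\times(n-p+1)$ matrix $\bfQ$ with orthonormal columns and $\bfI_n-\bfP=\bfQ\bfQ^\top$, so that, with $m:=n-p+1$,
\begin{align*}
Q := \bfx_q^\top(\bfI_n-\bfP)\bfx_q = \|\bfQ^\top\bfx_q\|^2 .
\end{align*}
The decisive structural point is that $\bfP$ depends only on $\bfX^{(-q)}$ and is therefore \emph{independent} of $\bfx_q$.

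Second, I would prove a CLT for $Q$ by conditioning on $\bfX^{(-q)}$. Given $\bfX^{(-q)}$, the matrix $\bfA:=\bfI_n-\bfP$ is a fixed symmetric projection of rank $m$ with $\|\bfA\|=1$, and $Q=\bfx_q^\top\bfA\bfx_q$ is a quadratic form in the i.i.d.\ centered entries of $\bfx_q$. The standard moment identities give $\E[Q\mid\bfX^{(-q)}]=\tr\bfA=m$ and
\begin{align*}
V_n := \Var(Q\mid\bfX^{(-q)}) = (\nu_4-3)\sum_{i=1}^n\bfA_{ii}^2 + 2\tr(\bfA^2) = (\nu_4-3)\sum_{i=1}^n(1-\bfP_{ii})^2 + 2m .
\end{align*}
Since $\|\bfA\|$ is bounded and $m\to\infty$, a CLT for quadratic forms (via a martingale-difference decomposition and a Lindeberg condition, using only $\E[x_{11}^4]<\infty$) yields the conditional convergence $(Q-m)/\sqrt{V_n}\cond\mathcal N(0,1)$.

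Third, and this is what I expect to be the main obstacle, I would show that the random conditional variance stabilizes, namely $V_n/m\conp\rho$; this amounts to proving $\tfrac1m\sum_i(1-\bfP_{ii})^2\conp 1-y$, i.e.\ that the diagonal entries of the random projection $\bfP$ concentrate at $y$. Writing $\bfc_i\in\R^{p-1}$ for the $i$th column of $\bfX^{(-q)}$ one has $\bfP_{ii}=\tfrac1n\bfc_i^\top\big(\tfrac1n\bfX^{(-q)}(\bfX^{(-q)})^\top\big)\inv\bfc_i$, and a leave-one-out (Sherman–Morrison) argument, together with the convergence of the $\tfrac1n$-normalized trace of the inverse of a sample covariance matrix with aspect ratio $y$, yields $\bfP_{ii}\conp y$ for each $i$. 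Combined with exchangeability of the coordinates (so that $\E\bfP_{ii}=(p-1)/n$ and $\sum_i\bfP_{ii}=p-1$) and a variance bound for $\tfrac1n\sum_i\bfP_{ii}^2$, this gives $\tfrac1m\sum_i(1-\bfP_{ii})^2\conp 1-y$ and hence $V_n/m\conp 2+(\nu_4-3)(1-y)=\rho$.

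Finally I would assemble the pieces. Transferring the conditional CLT to an unconditional one (for instance by showing that $\E[\exp(\mathrm i t(Q-m)/\sqrt m)\mid\bfX^{(-q)}]$ converges in probability to $\exp(-t^2\rho/2)$ and applying dominated convergence) gives $(Q-m)/\sqrt m\cond\mathcal N(0,\rho)$. A direct computation from the representation above rewrites the target statistic as
\begin{align*}
\frac{\sqrt{n-p+1}}{(\bfSigma\inv)_{qq}}\left(\frac{n-p+1}{n}(\hat\bfSigma\inv)_{qq} - (\bfSigma\inv)_{qq}\right) = -\,\frac{Q-m}{\sqrt m}\cdot\frac{m}{Q},
\end{align*}
and since $Q/m\conp 1$ by the law of large numbers, Slutsky's theorem delivers the claimed limit $\mathcal N(0,\rho)$.
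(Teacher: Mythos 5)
Your proposal is correct, and its skeleton coincides with the paper's own proof: represent the reciprocal of the diagonal entry as the quadratic form $Q=\bfx_q^\top(\bfI_n-\bfP)\bfx_q$, with $m=n-p+1$, in which the projection is independent of $\bfx_q$; establish a CLT for $Q$ via a martingale-difference decomposition with a Lindeberg condition; show that the random variance proxy $\tfrac{1}{m}\sum_{i}(1-\bfP_{ii})^2$ stabilizes at $1-y$; and conclude by Slutsky/delta method (your identity rewriting the target statistic as $-\tfrac{Q-m}{\sqrt m}\cdot\tfrac{m}{Q}$ is exactly the paper's Step 3). The differences are in execution, and each of your choices is legitimate. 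First, you derive the quadratic-form identity from the Schur complement, which works for every $q$ simultaneously and makes the cancellation of the diagonal $\bfSigma$ transparent; the paper instead reduces to $q=p$ by exchangeability and runs a QR decomposition, obtaining the same object $r_{pp}^2=\bfb_p^\top\bfP(p-1)\bfb_p$ via Cramer's rule and the Cauchy--Binet formula. Second, you condition on $\bfX^{(-q)}$, prove a CLT for a quadratic form with a deterministic projection matrix, and then decondition through convergence in probability of conditional characteristic functions; the paper avoids any deconditioning step by placing $\sigma(\bfb_1,\dots,\bfb_{p-1})$ at the bottom of the martingale filtration and normalizing by the random quantity $\rho_n$, so the martingale CLT is applied unconditionally. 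Your transfer step needs a little technical care (e.g.\ a subsequence argument so the deterministic-matrix CLT applies along realizations where $V_n/m$ converges almost surely), and the Lindeberg verification under only $\E[x_{11}^4]<\infty$ requires the truncation device the paper borrows from Bhansali et al.\ (2007); both are standard. Third, for the stabilization $\tfrac1n\sum_i(\bfP_{ii}-y)^2\conp0$ you propose a leave-one-out/Sherman--Morrison argument resting on the Marchenko--Pastur law and Bai--Yin for the smallest eigenvalue (both available under the assumed fourth moment), upgraded by exchangeability and boundedness of the $\bfP_{ii}$; the paper instead quotes Theorem 3.2 of Anatolyev and Yaskov (2017) for $y\in(0,1)$ and gives a direct trace bound for $y=0$. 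Your route is more self-contained in spirit, but make sure it also covers $y=0$, where $p$ need not diverge; there the elementary bound $\E[\bfP_{ii}]=(p-1)/n\to0$ together with $\bfP_{ii}\ge0$ already suffices.
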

	The proofs of  this  and of all 
   other results in this paper 
   are deferred to Section \ref{sec_proof_vary} and \ref{sec_proof_joint_conv}. 
   At this point, we only sketch the main
   arguments for  the proof of Theorem \ref{thm_prec}.
    We use  a QR-decomposition of the data matrix to derive a representation of the diagonal entry as the inverse of a quadratic form. With this knowledge at hand, we prove a CLT 
    for this quadratic form
    by an application of a central limit theorem for martingale difference schemes. By the delta method, we finally get asymptotic normality for $(\mathbf{\hat \Sigma}\inv)_{qq}$ being its inverse. 
    Note that  QR-decompositions appear in other contexts in random matrix theory. For example, \cite{wang2018} used this tool to derive the logarithmic law of the determinant of the sample covariance matrix for the case $p/n \to 1$, while \cite{heiny2021log} recently investigated the log-determinant of the sample correlation matrix under an infinite fourth moment. We also refer to  \cite{nguyen_vu} and \cite{bao2015},  who used the QR-decomposition to 
    	provide proofs of Girko's logarithmic law for a general random matrix with independent entries.

	\begin{remark}~~ \label{rem21}
	{\rm 
	\begin{enumerate}
	\item   Remarkably, our result  also covers the moderately high dimensional case $y=0$, where the dimension is negligible in comparison to the sample size. In this case, we may formulate the statement of Theorem \ref{thm_prec} as 
		\begin{align*}
		\frac{ \sqrt{n } }{ \lb \mathbf{\Sigma} \inv \rb_{qq} } \lb 
			 \lb \mathbf{\hat{\Sigma}} \inv \rb_{qq} - \lb \mathbf{\Sigma} \inv \rb_{qq}
			\rb 
		\cond W \sim \mathcal{N} ( 0, \nu_4 - 1 ),~ n \to\infty.
	\end{align*}	
	 \item 
	 %By the representation \eqref{diff_log2},
	 As mentioned previously,
	 the statistic $\log (( \mathbf{\Sigma} \inv )_{qq} ) $ can be   interpreted as a difference of two linear spectral statistics of  sample covariance matrices and a CLT for this random variable would yield a CLT for $( \mathbf{\Sigma} \inv )_{qq}$ via the delta method.
		 Recently, \cite{erdoes} 
		 considered the  case  $\mathbf{\Sigma} = \mathbf{I}$
		 and 
		 developed a CLT for the difference of linear spectral statistics of a sample covariance matrix and its minor, which is applicable to a standardized and centered version of $\log ( \mathbf{\Sigma} \inv )_{qq}  $. 
		 Their result requires i.i.d. entries 
		 $x_{ij}$ with  finite moments of all order, while we only assume a finite fourth moment in Theorem \ref{thm_prec}. Moreover, 
		  in comparison to Theorem \ref{thm_prec}, their asymptotic regime  does not include the 
		  %critical 
		  case $p/n \to 0$. 
		  Note that \cite{erdoes}
		 do not 
		 assume the existence of the limit $y$ of $p/n$. We only need this assumption to determine the limiting variance $\rho$, 
		 but it is not necessary for proving a CLT as in Theorem \ref{thm_prec}. One could instead normalize  by a factor  $1/\sqrt{\rho_n}$ defined in equation  \eqref{def_rho_n} in the proof of Theorem \ref{thm_prec}. 
		 We also  emphasize that the techniques used for proving Theorem \ref{thm_prec} 
		 sets us in the position to investigate the joint convergence of several diagonal elements of the sample precision matrix given in Theorem \ref{thm_joint} below.
		 \item   Furthermore, the entries of the empirical precision matrix can also be interpreted as entries of the resolvent matrix $\mathbf{D}(z) = ( \hat\bfSigma - z \mathbf{I})\inv$ for $z=0$. This draws an interesting connection to other existing works in this field. For $z \in \mathbb{C} \setminus \mathbb{R}$ and $y>0$, the fluctuations of the entries of $\mathbf{D}(z)$ are investigated in Theorem 5.1 by \cite{o2014fluctuations}. In this work, the asymptotic normality of the entries is concluded from a central limit theorem for quadratic forms (see Theorem 6.4 of \cite{benaych2011fluctuations}), while we concentrate on directly verifying the conditions of a central limit theorem for martingale difference schemes. We note that Theorem 6.4 of \cite{benaych2011fluctuations} is also applicable to our setting. However, for the sake of completeness, we will prove asymptotic normality via the martingale central limit theorem and thus, extend a result of \cite{bhansali2007} on central limit theorems for quadratic forms. 
	\end{enumerate}
	}
	\end{remark}

	The variance and mean structure of the limiting distribution of linear spectral statistics of sample covariance matrices are usually expressed via contour integrals and depend on the limiting spectral distribution of $\bfSigma$ in a subtle way \citep[see][]{baisilverstein2004, najimyao2016, panzhou2008}. So far, an explicit expression for these quantities has only been found in the null case $\bfSigma = \bfI$, and even for diagonal matrices 
		 as considered in Theorem \ref{thm_prec}, explicit expressions are out of reach. In this case, despite its close connection to these kinds of linear spectral statistics, the corresponding quantities of a diagonal entry $(\hat\bfSigma\inv)_{qq}$ 
		 depend asymptotically 
		 on its population version $(\bfSigma\inv)_{qq}$ in an explicit form. 
		 In particular,  for $\bfSigma = \operatorname{diag}(\bfSigma)$, the asymptotic mean and variance of a scaled diagonal entry $\sqrt{n-p} (\hat\bfSigma\inv)_{qq}/ (\bfSigma\inv)_{qq} $ do not depend on $\bfSigma\inv$ anymore. Moreover,
		 the following corollary, which is a direct 
		 consequence of Theorem \ref{thm_prec} and Lemma \ref{lem_cramer_norm} in Section \ref{sec_aux_thm},
		 shows that these statements are correct 
		  for  general  population covariance matrices
		  when imposing a normal assumption on the data.

	\begin{corollary} \label{cor_normal}
	Let $\bfSigma \in\R^{p\times p}$ be a symmetric positive definite matrix and assume that  the random variables $\{ x_{ij} ~|~ 1 \leq i \leq p, ~ 1 \leq j \leq n \} $
	in \eqref{hol1}    
	are i.i.d. with  	$x_{ij} \sim \mathcal{N}(0,1)$. 
	Then, it holds for $n\to \infty, p/n \to y \in [0,1)$ and $q\in\{1, \ldots, p\}$
		\begin{align*}
		\frac{ \sqrt{n - p + 1} }{ \lb \mathbf{\Sigma} \inv \rb_{qq} } \lb 
			 \frac{n - p + 1}{n} \lb \mathbf{\hat{\Sigma}} \inv \rb_{qq} - \lb \mathbf{\Sigma} \inv \rb_{qq}
			\rb 
		\cond \mathcal{N} ( 0, 2 ),~ n \to\infty.
	\end{align*}	
\end{corollary}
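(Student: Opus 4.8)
The plan is to use normality to reduce the general positive-definite case to the diagonal case already settled in Theorem \ref{thm_prec}. The engine of this reduction is Lemma \ref{lem_cramer_norm}, whose role I expect to be the identification of the exact finite-sample law of the scaled diagonal entry via the Cramer-rule representation $(\hat\bfSigma\inv)_{qq}=|\hat\bfSigma^{(-q)}|/|\hat\bfSigma|$. Under $x_{ij}\sim\mathcal N(0,1)$ the columns of $\bfSigma^{1/2}\bfX_n$ are i.i.d.\ $\mathcal N(0,\bfSigma)$, and $1/(\hat\bfSigma\inv)_{qq}$ equals $1/n$ times the residual sum of squares from regressing the $q$th coordinate on the remaining $p-1$ coordinates (no intercept, all means zero); classical Gaussian regression theory then makes $n(\bfSigma\inv)_{qq}/(\hat\bfSigma\inv)_{qq}$ a $\chi^2_{n-p+1}$ variable, using $1/(\bfSigma\inv)_{qq}$ as the conditional variance. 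The decisive structural consequence is that the normalized statistic $\frac{n-p+1}{n}(\hat\bfSigma\inv)_{qq}/(\bfSigma\inv)_{qq}$ is pivotal: its distribution does not depend on the particular symmetric positive-definite matrix $\bfSigma$.

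Given this invariance, I would specialize to $\bfSigma=\bfI$, which is diagonal with positive diagonal entries and hence lies within the scope of Theorem \ref{thm_prec}. For standard normal data $\nu_4=\E[x_{11}^4]=3$, so the limiting variance furnished by Theorem \ref{thm_prec} is
\[
\rho=2+(\nu_4-3)(1-y)=2,
\]
independently of $y\in[0,1)$. Theorem \ref{thm_prec} thus yields the asserted central limit theorem with variance $2$ for $\bfSigma=\bfI$, and the pivotality established in the first step transports the identical weak limit to every symmetric positive-definite $\bfSigma$, since both the statistic and its normalization are unchanged in law under the substitution.

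Because the substantive analytic content is housed entirely in Theorem \ref{thm_prec} and the exact distributional identity in Lemma \ref{lem_cramer_norm}, the remaining argument is purely a bookkeeping reduction. The one point demanding care is to check that the normalization appearing in the corollary coincides, after setting $\bfSigma=\bfI$, with the one in Theorem \ref{thm_prec}, so that the pivotal law is invoked at the correct scale. I note that one could alternatively dispense with Theorem \ref{thm_prec} and finish from the chi-squared identity alone: writing $C_m\sim\chi^2_m$ with $m=n-p+1$ and $(\hat\bfSigma\inv)_{qq}=n(\bfSigma\inv)_{qq}/C_m$, the target statistic equals $\sqrt{m}\,(m/C_m-1)=-\frac{C_m-m}{\sqrt{m}}\cdot\frac{m}{C_m}$, whence the classical central limit theorem $(C_m-m)/\sqrt{2m}\cond\mathcal N(0,1)$ together with $m/C_m\conp 1$ and Slutsky's lemma delivers the $\mathcal N(0,2)$ limit directly.
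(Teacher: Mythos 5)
Your proposal is correct and follows essentially the same route as the paper: the corollary is obtained there as a direct consequence of Lemma \ref{lem_cramer_norm}, which under normality gives exactly your pivotality statement $(\hat\bfSigma\inv)_{qq}/(\bfSigma\inv)_{qq} \stackrel{\mathcal{D}}{=} (\hat\bfI\inv)_{qq}$, combined with Theorem \ref{thm_prec} applied at $\bfSigma=\bfI$ with $\nu_4=3$, so that $\rho = 2 + (\nu_4-3)(1-y) = 2$. Your alternative chi-squared finish is likewise already in the paper, as the ``normal case'' illustration inside the proof of Theorem \ref{thm_prec}, where $n\lb(\hat\bfI\inv)_{pp}\rb\inv = r_{pp}^2 \sim \chi^2_{n-p+1}$ and the delta method (equivalently your Slutsky argument) yields the $\mathcal{N}(0,2)$ limit.
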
		

Our final result of this section  provides the joint asymptotic distribution of two diagonal entries and is proven in Section \ref{sec_proof_joint_conv}.
\begin{theorem} \label{thm_joint}
Let $\bfSigma \in\R^{p\times p}$ be a diagonal matrix with positive diagonal entries. 
	Assume that  the random variables
    $\{ x_{ij} ~|~ 1 \leq i \leq p, ~ 1 \leq j \leq n \} $
	in \eqref{hol1}    
	are i.i.d. with continuous distribution,   $\E[x_{11}]=0, ~\Var (x_{11})=1$ and $\E[x_{11}^4] = \nu_4 < \infty$. Let $p/n \to y \in [0,1)$ for $n\to\infty$. 
	Then, it holds for $n\to \infty$ and $1 \leq q_1 \neq q_2 \leq p$
		\begin{align*}
		& \left\{ \frac{ \sqrt{n - p + 1} }{ \lb \mathbf{\Sigma} \inv \rb_{ii} } \lb 
			 \frac{n - p + 1}{n} \lb \mathbf{\hat{\Sigma}} \inv \rb_{ii} - \lb \mathbf{\Sigma} \inv \rb_{ii}
			\rb 
			\right\}_{i=q_1, q_2}^\top 
		\cond   \mathcal{N}_2 \lb \mathbf{ 0}, \rho \mathbf{I}_2 \rb,~ n \to\infty,
	\end{align*}	
	where $\rho = 2 + (\nu_4 - 3)	(1-y)$.
\end{theorem}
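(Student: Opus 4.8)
The plan is to establish the bivariate limit via the Cram\'er--Wold device, i.e. to prove a univariate central limit theorem for an arbitrary linear combination $a_1Z_{q_1}+a_2Z_{q_2}$, where $Z_{q_1},Z_{q_2}$ denote the two coordinates of the vector in the statement; the covariance structure $\rho\bfI_2$ will then follow once I show that $Z_{q_1}$ and $Z_{q_2}$ are asymptotically independent. A useful first simplification is that, because $\bfSigma$ is diagonal, the matrix $\bfSigma^{-1/2}$ is diagonal as well; writing $\mathbf S=\bfX_n\bfX_n^\top$ one has $(\hat\bfSigma\inv)_{qq}=(n/\bfSigma_{qq})(\mathbf S\inv)_{qq}$ and $(\bfSigma\inv)_{qq}=1/\bfSigma_{qq}$, so that the scale $\bfSigma_{qq}$ cancels exactly in the normalisation and
$$Z_q=\sqrt{n-p+1}\,\bigl((n-p+1)(\mathbf S\inv)_{qq}-1\bigr).$$
Since this is free of $\bfSigma$, it suffices to treat the null case $\bfSigma=\bfI$.

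Next I recall, from the proof of Theorem \ref{thm_prec}, the representation $1/(\mathbf S\inv)_{qq}=\rd_q\bfQ_q\rd_q^\top$, where $\rd_q$ is the $q$th row of $\bfX_n$ and $\bfQ_q=\bfI_n-(\bfX^{(-q)})^\top(\bfX^{(-q)}(\bfX^{(-q)})^\top)\inv\bfX^{(-q)}$ is the orthogonal projection of rank $n-p+1$ onto the complement of the row space of the matrix $\bfX^{(-q)}$ obtained by deleting the $q$th row. For two indices the projections $\bfQ_{q_1}$ and $\bfQ_{q_2}$ are entangled, since $\bfQ_{q_1}$ still depends on $\rd_{q_2}$ and vice versa. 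To decouple them I pass to the projection $\bfQ_{12}$ of rank $n-p+2$ onto the complement of the row space of $\bfX^{(-q_1,-q_2)}$, the matrix with both rows removed. As the row space defining $\bfQ_{q_1}$ is spanned by that of $\bfX^{(-q_1,-q_2)}$ together with $\rd_{q_2}$, one obtains the rank-one update $\bfQ_{q_1}=\bfQ_{12}-\bfP_{q_2}$ with $\bfP_{q_2}$ the rank-one projection onto $\bfQ_{12}\rd_{q_2}^\top$. The correction $\rd_{q_1}\bfP_{q_2}\rd_{q_1}^\top$ is a rank-one quadratic form with conditional mean $1$ and bounded variance, hence $O_\PR(1)$, and after division by $\sqrt{n-p+1}$ it is negligible. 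Combining this with the delta method for $x\mapsto 1/x$ around $n-p+1$, exactly as in Theorem \ref{thm_prec}, gives
$$Z_{q_i}=-\frac{V_{q_i}-(n-p+2)}{\sqrt{n-p+1}}+o_\PR(1),\qquad V_{q_i}:=\rd_{q_i}\bfQ_{12}\rd_{q_i}^\top,\quad i=1,2,$$
and crucially $V_{q_1},V_{q_2}$ now share the \emph{same} projection $\bfQ_{12}$ while involving the two \emph{distinct} rows $\rd_{q_1},\rd_{q_2}$.

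Conditionally on $\bfX^{(-q_1,-q_2)}$ the rows $\rd_{q_1}$ and $\rd_{q_2}$ are independent, so $V_{q_1}$ and $V_{q_2}$ are conditionally independent, and their conditional means both equal the deterministic quantity $\tr\bfQ_{12}=n-p+2$; hence $\cov(V_{q_1},V_{q_2})=0$ and, more to the point, the conditional cross-variance in the martingale decomposition vanishes. For fixed $(a_1,a_2)$ I then run the martingale central limit theorem used for Theorem \ref{thm_prec}, now applied to the stacked martingale difference array built from the bivariate sequence $\{(x_{q_1 k},x_{q_2 k})\}_{k=1}^n$ with respect to the filtration generated by $\bfX^{(-q_1,-q_2)}$ and the first $k$ coordinates of the two rows. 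By conditional independence the conditional variance of $a_1Z_{q_1}+a_2Z_{q_2}$ separates into $a_1^2$ and $a_2^2$ times the single-entry conditional variance $\bigl(2\tr\bfQ_{12}+(\nu_4-3)\sum_k(\bfQ_{12})_{kk}^2\bigr)/(n-p+1)$, which tends to $\rho=2+(\nu_4-3)(1-y)$ by the delocalisation estimate $\sum_k(\bfQ_{12})_{kk}^2/(n-p+2)\conp 1-y$ for a projection onto a random subspace. The Lindeberg condition is handled by truncation under the sole assumption $\nu_4<\infty$, as before. This yields $a_1Z_{q_1}+a_2Z_{q_2}\cond\mathcal N\bigl(0,(a_1^2+a_2^2)\rho\bigr)$, and Cram\'er--Wold gives the asserted limit $\mathcal N_2(\mathbf 0,\rho\bfI_2)$.

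I expect the main obstacle to be the decoupling of the two quadratic forms: they do not separate directly, because $\bfQ_{q_1}$ and $\bfQ_{q_2}$ each depend on the other coordinate's row, and the whole argument hinges on replacing them by the common projection $\bfQ_{12}$ and showing that the resulting rank-one corrections contribute only $o_\PR(1)$. Once this decoupling is in place, the asymptotic independence is essentially forced by the conditional independence of the two rows together with the constancy of their conditional means, so no genuinely new probabilistic input beyond the single-entry analysis is needed. A secondary technical point deserving care is the delocalisation estimate for $\sum_k(\bfQ_{12})_{kk}^2$, since the precise value of $\rho$ rests on it.
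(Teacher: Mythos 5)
Your proposal is correct and follows essentially the same route as the paper: the paper likewise reduces to the null case, replaces the two entangled projections by the common projection $\bfP(p-2)$ (your $\bfQ_{12}$) with rank-one corrections shown to be $O_{\PR}(1)$ via Lemma \ref{lem_rep_inv_quad_form}, and then applies Cram\'er--Wold together with the martingale CLT on the interleaved filtration $\mathcal{A}_{pi}$, with the cross terms vanishing by independence of the two rows and the limiting variance identified through Lemma \ref{lem_con_pii} (the delocalisation estimate you flag, proved there via \cite{anatolyev_yaskov_2017}). No substantive difference from the paper's argument.
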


 \begin{remark}
 {\rm 
	Note that Theorem \ref{thm_joint} provides a nontrivial generalization of Theorem \ref{thm_prec} since the diagonal entries of the empirical precision matrix 
	are not independent. For more details on the concrete dependence structure, we refer the reader to Lemma \ref{lem_formula_inv} and  \ref{lem_rep_inv_quad_form} in Section \ref{sec_proof_joint_conv}. 
	Moreover, it is notable that these random variables are asymptotically independent.
% 	in the sense that  
% 		\begin{align*}
% 			\lim\limits_{n\to\infty} \mathbb{P} ( Z_{n,q_1} \in A, Z_{n,q_2} \in B) 
% 			= \lim\limits_{n\to\infty} \mathbb{P} ( Z_{n,q_1} \in A)  \lim\limits_{n\to\infty} \mathbb{P} ( Z_{n,q_2} \in B) , ~ 1 \leq q_1 \neq q_2 \leq p,
% 		\end{align*}
% 	for any Borel sets $A, B\subset\R$,
	%which is a consequence of Theorem \ref{thm_prec} and  \ref{thm_joint}. 
	% Here, $Z_{n,q_1}$ denotes a transformation of the $q_1$th diagonal element of the sample precision matrix, that is,
% 	\begin{align*}
% 		Z_{n,q_1} = \frac{ \sqrt{n - p + 1} }{ \lb \mathbf{\Sigma} \inv \rb_{q_1,q_1} } \lb 
% 			 \frac{n - p + 1}{n} \lb \mathbf{\hat{\Sigma}} \inv \rb_{q_1,q_1} - \lb \mathbf{\Sigma} \inv \rb_{q_1,q_1}
% 			\rb .
% 	\end{align*}
%	\item In view of Theorem \ref{thm_joint}, several statistical applications are possible. 
%	On the one hand, a confidence region for several diagonal entries of the sample precision matrix can be constructed. One the other hand, a variety of statistical tests \citep[see examples in][]{chang2018confidence}. 
%	\end{enumerate}
In general, this property will not be valid beyond the diagonal case, and we can observe a proper dependency between two diagonal entries of the sample precision matrix. In particular, we know for the case of normally distributed data from the properties of the inverse Wishart distribution \citep[see, e.g.][]{von1988moments, press2005applied}  that 
$$
\cov \lb \sqrt{n-p} \frac{n-p}{n}  (\hat\bfSigma\inv)_{q_1,q_1}, \sqrt{n-p} \frac{n-p}{n} (\hat\bfSigma\inv)_{q_2,q_2} \rb = 2 (\bfSigma\inv)_{q_1,q_2} + o(1)  
$$
for $1 \leq q_1, q_2 \leq p$ and $p/n=\mathcal{O}(1)$.
}
 \end{remark}

	\section{Proof of Theorem \ref{thm_prec} } \label{sec_proof_vary}
	  In order to state the proofs rigorously, we need to introduce further notation. 
    We denote the columns of the random matrix $\bfX_n$ by $\bfx_1, \ldots, \bfx_n$ and the rows by $\bfb_1, \ldots, \bfb_p,$ that is, we write 
	\begin{align} \label{def_bp}
	\mathbf{X}_n 
	=(x_{ij})_{\substack{i=1,...,p \\ j=1,...,n}}  
	= (\mathbf{b}_1, \ldots, \mathbf{b}_p)^\top = (\mathbf{x}_1, \ldots, \mathbf{x}_n)
	\in \R^{p \times n}. 
\end{align} 
    In the case $\bfSigma = \bfI$, we denote the sample covariance matrix by
	\begin{align*}
		\hat{\mathbf{I}} = \frac{1}{n}  \mathbf{X}_n \mathbf{X}_n^\top
		= \frac{1}{n}\sum\limits_{i=1}^n \mathbf{x}_i \mathbf{x}_i^\top 
		\in \R^{p\times p}.
	\end{align*}
 In order to pursue the approach based on Cramer's rule as described in the introduction, we will introduce several submatrices. 
	If we set for some $q\in\{1, \ldots, p\}$
	\begin{align*}
		\tilde{\mathbf{X}}_n^{(-q)} = (\mathbf{b}_1, \ldots, \mathbf{b}_{q-1}, 
		\mathbf{b}_{q+1}, \ldots, \mathbf{b}_p)^\top
		% = (\mathbf{c}_1, \ldots, \mathbf{c}_n) 
		\in \R^{(p-1) \times n},
	\end{align*}
	then 
	\begin{align*}
		\hat{ \mathbf{I}}^{(-q)} = \frac{1}{n} \tilde{\mathbf{X}}_n^{(-q)} \lb \tilde{\mathbf{X}}_n^{(-q)} \rb ^\top
		\in \R^{(p-1) \times (p-1)}
	\end{align*}
	can be obtained from $\hat{\mathbf{I}}$ by deleting the $q$th row and the $q$th column. 
	Similarly, if we set $\mathbf{Y}_n = \mathbf{\Sigma}\sq \mathbf{X}_n = (\mathbf{d}_1, \ldots, \mathbf{d}_p)^\top \in \R^{p \times n}$ and $\tilde{\mathbf{Y}}_n^{(-q)} = (\mathbf{d}_1, \ldots, \mathbf{d}_{q -1}, \mathbf{d}_{q+1}, \ldots, \mathbf{d}_p)^\top$, we define
	\begin{align*}
		\hat{\mathbf{\Sigma}} = \frac{1}{n} \mathbf{Y}_n \mathbf{Y}_n^\top
		\textnormal{ and }
		\hat{\mathbf{\Sigma}  }^{(-q)} = \frac{1}{n} \tilde{\mathbf{Y}}_n^{(-q)} \lb \tilde{\mathbf{Y}}_n^{(-q)} \rb^\top. 
	\end{align*}
	Additionally, the matrix $\mathbf{\Sigma}^{(-q)} \in \R^{ (p - 1) \times (p -1)}$ 
	% denotes the population version of $\tilde{\mathbf{\Sigma}}^{(-q)}$, that is, it 
	can be obtained from $\mathbf{\Sigma}$ by deleting the $q$th row and the $q$th column. 
% 	Moreover, we denote by
% 	\begin{align} \label{def_P}
% 		\mathbf{P}(q) = & \mathbf{I} - \tilde{\mathbf{X}}_{n,q}^\top \lb \tilde{\mathbf{X}}_{n,q}  \tilde{\mathbf{X}}_{n,q}^\top\rb\inv \tilde{\mathbf{X}}_{n,q} \in \R^{n\times n}, 
% 	\end{align}
% 	 the projection matrix on the orthogonal complement of the subspace generated by the first $q$ rows of $\mathbf{X}_n$, that is,
% 	 \begin{align} \label{def_tilde_xnq}
% 	 \tilde{\mathbf{X}}_{n,q} = & (\mathbf{b}_1, \ldots, \mathbf{b}_{q} )^\top \in \R^{q \times n} .
% 	 \end{align}
		
		We continue by proving Theorem \ref{thm_prec} using a CLT for martingale difference schemes. 
% 		\textcolor{purple}{Subsequently, these ideas are generalized for the subsampling case in Theorem \ref{thm_subsample}.}
		The auxiliary results for these proofs can be found in Section \ref{sec_aux_thm}.

	\begin{proof}[Proof of Theorem \ref{thm_prec}]

Noting that $\bfSigma$ is a diagonal matrix and that the distribution of $\bfX_n$ is invariant under a permutation of the $q$th and the $p$th row, we see that
	\begin{align*}
		\frac{ \lb \hat{ \mathbf{\Sigma}} \inv \rb_{qq} } { \lb \mathbf{\Sigma} \inv \rb_{qq} }
		=  \lb \hat{ \mathbf{I}} \inv \rb_{qq} \stackrel{\mathcal{D}}{=} 
		\lb \hat{ \mathbf{I}} \inv \rb_{pp}
		= \frac{ \lb \hat{ \mathbf{\Sigma}} \inv \rb_{pp} } { \lb \mathbf{\Sigma} \inv \rb_{pp} }.
	\end{align*}
	Thus, we may assume $q=p$ without loss of generality. From now on, 	the proof is divided in several steps. 
	 \subsubsection*{Step 1: QR decomposition}

In this step, we rewrite $|\hat \bfI|$ and $| \hat{\bfI}^{(-p)} |$ in a more handy form via the QR decomposition. More details on this decomposition can be found in Section \ref{sec_qr_decomp}.

	 As explained in detail in Section \ref{sec_qr_decomp}, we get by proceeding the QR-decomposition for $\bfX_n^\top$ 
	 \begin{align} \label{qr_x}
		\mathbf{X}_n^\top = \mathbf{QR}, ~  \mathbf{X}_n = \mathbf{R}^\top \mathbf{Q}^\top,
	\end{align}	  
	where $\mathbf{Q}=(\mathbf{e}_1, \ldots, \mathbf{e}_p)\in\R^{n\times p}$ denotes a matrix with orthonormal columns satisfying $\mathbf{Q}^\top \mathbf{Q} = \mathbf{I}$ and $\mathbf{R}\in\R^{p\times p}$ is an upper triangular matrix with entries $r_{ij} = (\mathbf{e}_i, \mathbf{b}_j)$ for $i\leq j$ and $r_{ij} = 0$ for $ i > j$, $i,j\in\{1, \ldots, p\}$.
	% Here, $(\cdot, \cdot)$ denotes the inner product in $\R^n$. 
	Note that, since $\lb \tilde{\mathbf{X}}_n^{(-p)} \rb ^\top$ is the same as $\mathbf{X}_n^\top$ but with the $p$th column $\mathbf{b}_p$ removed, we have 
	  \begin{align} \label{qr_x_tilde}
		\lb \tilde{\mathbf{X}}_n^{(-p)} \rb^\top = \mathbf{Q} \tilde{\mathbf{R}}, ~ 
		 \tilde{\mathbf{X}}_n^{(-p)} = \tilde{\mathbf{R}}^\top \mathbf{Q}^\top,
	\end{align}	
	where $\tilde{\mathbf{R}} = (r_{ij})_{\substack{1 \leq i \leq p, \\1 \leq j \leq p-1}} \in \R^{ p \times (p - 1)} $ and we set  $\tilde{\mathbf{R}}^{(-p)} = (r_{ij})_{\substack{1 \leq i, j \leq p -1}} \in \R^{ (p -1) \times (p - 1)} $.
	Using \eqref{qr_x}, we write
	\begin{align*}
		| \mathbf{X}_n \mathbf{X}_n^\top | = | \mathbf{R}^\top \mathbf{Q}^\top \mathbf{Q} \mathbf{R} | = | \mathbf{R}^\top \mathbf{R} | 
		= | \mathbf{R} |^2
		= \prod\limits_{i=1}^p r_{ii}^2 
	\end{align*}
	and similarly, by using \eqref{qr_x_tilde} and the Cauchy-Binet formula,
	\begin{align*}
		\left| \tilde{\mathbf{X}}_n^{(-p)} \lb \tilde{\mathbf{X}}_n^{(-p)} \rb^\top \right| 
		= | \tilde{\mathbf{R}}^\top \tilde{\mathbf{R}} | 
		=  | \tilde{\mathbf{R}}^{(-p)} | ^2
		= \prod\limits_{\substack{i=1, \\ i \neq p}}^p r_{ii}^2.
	\end{align*}
	Thus, we obtain from 
	%Lemma \ref{lem_cramer} and 
	Cramer's rule and the fact that $\bfSigma$ is a diagonal matrix, 
	\begin{align}
		& \lb \frac{  \lb \hat \bfSigma \inv \rb_{pp} }{\lb  \bfSigma \inv \rb_{pp}} \rb\inv
		= \lb \lb  \hat\bfI\inv \rb_{pp} \rb\inv 
		= \frac{| \hat \bfI | }{| \hat\bfI^{(-p)} | }
		=  \frac{1}{n} r_{pp}^2. 
		\label{z2}
	\end{align}
	
	Before continuing with Step 2 of the proof of Theorem \ref{thm_prec}, we visit as an illustrating example the normal case where the distribution of $r_{pp}^2$ is explicitly known. 
	 \subsubsection*{Illustration: The normal case}
		If we assume additionally that $x_{ij} \sim \mathcal{N}(0,1)$ i.i.d. for $i\in\{1,\ldots,p\}, j\in\{1, \ldots, n\}$, then it is well-known that $r_{pp}^2 \sim \mathcal{X}_{n - p + 1}$ (see, e.g.,  \cite{goodman} or directly use \eqref{diag_r_quad_form}), that is, 
		\begin{align*}
			r_{pp}^2 \stackrel{\mathcal{D}}{=} \sum\limits_{j=1}^{n-p + 1} Z_j^2 ,
		\end{align*}
		where $Z_j$ are i.i.d. standard normal distributed random variables, $j\in\{1, \ldots, n - p + 1\}$. 
		Thus, we are able to apply a CLT for $r_{pp}^2$, namely,
		\begin{align*}
		& \sqrt{n-p + 1} \lb \frac{1}{n-p + 1} r_{pp}^2 - 1 \rb 
		= \frac{1}{\sqrt{n - p + 1}}\sum\limits_{j=1}^{n-p + 1} ( Z_j^2 - 1)
		\cond   \mathcal{N} (0, 2 ).
		\end{align*}
		Applying the delta method, we get
		\begin{align*}
			\sqrt{n - p + 1}    \lb \frac{n - p + 1}{ r_{pp}^2}  - 1 \rb
			\cond  \mathcal{N} ( 0 , 2).
		\end{align*}
		Thus, using \eqref{z2}, we conclude
		\begin{align}
			 & \frac{ \sqrt{n - p + 1} }{ \lb \mathbf{\Sigma} \inv \rb_{pp} } \lb 
			 \frac{n - p + 1}{n} \lb \mathbf{\hat{\Sigma}} \inv \rb_{pp} - \lb \mathbf{\Sigma} \inv \rb_{pp}
			\rb \nonumber \\
			 = &  \sqrt{n - p + 1} \lb \frac{n - p + 1}{n} \frac{\lb \mathbf{\hat{\Sigma}} \inv \rb_{pp}}{\lb \mathbf{\Sigma} \inv \rb_{pp}} - 1 \rb \nonumber \\
			 = & \sqrt{n - p + 1}    \lb \frac{n - p + 1}{ r_{pp}^2}  - 1 \rb
			  \cond \mathcal{N} ( 0 ,2). \label{normal_case}
		\end{align}
		Note that in the normal case, we have $\nu_4=3$.
		%, which implies $\rho =2$ for all $n\in\N$.
		Thus, we have recovered the assertion of Theorem \ref{thm_prec} in this special case. 
%		\textcolor{blue}{ \\ Compare: Applying Theorem 9.8 in \cite{bai_silverstein_2010} for a suitable population covariance matrix $\mathbf{\Sigma}$, we see that the random variable		
%		\begin{align*}
%		\log | \hat{\mathbf{\Sigma}} | - c_n(\hat{\mathbf{\Sigma}} ) 
%		\end{align*}
%		satisfies a CLT. Thus, informal: $ \log | \hat{\mathbf{\Sigma}} | - \log | \tilde{\mathbf{\Sigma}} | $ is of significantly smaller order than $ \log | \hat{\mathbf{\Sigma}} |$. }

	\subsubsection*{Step 2: CLT for quadratic forms} 
	In this step, we will show that the random variable $r_{pp}^2$ meets the conditions of a CLT for martingale difference schemes.
	In Section \ref{sec_qr_decomp}, it is shown that (see \eqref{diag_r_quad_form})
	\begin{align*}
		r_{pp}^2 = \mathbf{b}_p^\top \mathbf{P}(p - 1) \mathbf{b}_p,
	\end{align*}
	where $\mathbf{P}(0) = \mathbf{I}_n$ and for $q>1$
	\begin{align} \label{def_P}
		\mathbf{P}(q) = & \mathbf{I} - \tilde{\mathbf{X}}_{n,q}^\top \lb \tilde{\mathbf{X}}_{n,q}  \tilde{\mathbf{X}}_{n,q}^\top\rb\inv \tilde{\mathbf{X}}_{n,q} \in \R^{n\times n}
	\end{align}
	denotes the projection matrix onto the orthogonal complement of the subspace generated by the first $q$ rows of $\mathbf{X}_n$, that is, 
	that is,
 	 \begin{align*} 
 	 %\label{def_tilde_xnq}
 	 \tilde{\mathbf{X}}_{n,q} = & (\mathbf{b}_1, \ldots, \mathbf{b}_{q} )^\top \in \R^{q \times n} .
 	 \end{align*}
 	 Note that the random vector $\bfb_p$ is defined in \eqref{def_bp}. 
	For the following analysis, we denote $\mathbf{P}(p-1) = \mathbf{P} = (p_{ik})_{1 \leq i,k \leq n}$, which only depends on the random variables $\bfb_1, \ldots,  \bfb_{p-1}$ and is independent of $\bfb_p$.

		 We write
	\begin{align*}
		\sqrt{ \frac{ n - p + 1}{\rho_n} } \frac{1}{n - p + 1} \lb r_{pp}^2 - ( n - p  + 1) \rb 
		= & \frac{1}{\sqrt{ \rho_n (  n - p + 1 ) } } \lb \mathbf{b}_p^\top \bfP  	
		\mathbf{b_p}
 		- \E_{\mathbf{b}_q} \left[\mathbf{b}_q^\top \bfP
		\mathbf{b_p} \right] \rb \\
		= & 
		\frac{1}{\sqrt{\rho_n ( n - p + 1 ) } } \sum\limits_{i=1}^n Z_{pi},
	\end{align*}
	where for $i\in\{1, \ldots, n\}$, $n\in\N$
	\begin{align}
		Z_{pi} = & 
		 2 b_{pi} \sum\limits_{k=1}^{i-1} p_{ki} b_{pk} 
		+  p_{ii} \lb b_{pi}^2 - \E [ b_{pi}^2]  
		\rb 		, \nonumber \\
		\rho_n  = & 2 + \frac{\nu_4 - 3}{ n - p + 1} \sum\limits_{i=1}^n  p_{ii}^2. \label{def_rho_n}
	\end{align}
		  For $i\in\{1,\ldots,n\}$, let $\E_i$ denote the conditional expectation with respect to the $\sigma$-field $\mathcal{F}_{pi}$ generated by $\{ \mathbf{b}_1, \ldots, \mathbf{b}_{p-1}\} \cup \{ b_{pk} : 1 \leq k \leq i \}$. Furthermore, $\E_0 [ X ] = \E[X]$ denotes the usual expectation. \\
		  Since $b_{pk}$ is measurable with respect to $\mathcal{F}_{p,i-1}$ for $k\in\{1, \ldots, i-1 \}$ and $b_{pj}$  is independent of $\mathcal{F}_{p,i-1}$ for $j\in\{i, \ldots,n\}$, and $\mathbf{P}$ is measurable with respect to $\mathcal{F}_{pi}$ for all $i \in \{1, \ldots, n\}$, we obtain
		  \begin{align*}
		  	\E_{i - 1} [ Z_{pi}]
		  	= &  
		 2 \sum\limits_{k=1}^{i-1}  \E_{i - 1} [ b_{pi}  p_{ki} ] b_{pk} 
		+  \E_{i-1} \left[  p_{ii}  \lb b_{pi}^2 - \E [ b_{pi}^2] \rb \right]
	 \\
	   = & 
		 2 \E [ b_{pi} ] \sum\limits_{k=1}^{i-1}  p_{ki}  b_{pk} 
		+   p_{ii}  \lb \E [ b_{pi}^2 ] - \E [ b_{pi}^2] \rb  
		= 0, ~ 2 \leq i \leq n.
		  \end{align*}
		  Note that $Z_{pi}$ is measurable with respect to $\mathcal{F}_{pi}$ ($1 \leq i \leq n$). 
	These observations imply that for each $n\in\N$,  $(Z_{pi})_{1 \leq i \leq n}$ forms a martingale difference sequence with respect to the filtration $(\mathcal{F}_{pi})_{1 \leq i \leq n}$. This representation of a random quadratic form as a martingale difference scheme generalizes the one of \cite{bhansali2007}. Note that we are not able to apply their Theorem 2.1 directly in order to prove asymptotic normality, since in our case $\mathbf{P}$ is a random matrix and the random vectors $\mathbf{b}_p$ vary with $n\in\N$.
	Thus, we have to give a direct proof showing that it satisfies the conditions of the central limit theorem for martingale difference sequences provided in Lemma \ref{mds} in Section \ref{sec_aux_thm}.
	More precisely, we will show that for all $\delta >0$
	\begin{align}
	\sigma_n^2 = \frac{1}{\rho_n ( n - p + 1 ) } \sum\limits_{i=1}^n \E_{i - 1} [ Z_{pi}^2 ]
	\conp  1 , \label{cond1} \\
	r_n(\delta) = \frac{1}{ \rho_n ( n - p + 1 ) } \sum\limits_{i=1}^n \E 
	\left[ Z_{pi}^2 I_{\{ |Z_{pi}| \geq \delta \sqrt{ ( n - p + 1 ) \rho_n } \} }\right]
	\to 0,
	\label{cond2}
	\end{align}
	as $n \to \infty.$ \\
	As a preparation for the following steps, we note that 
	\begin{align}
		& \max\limits_{l=1, \ldots, n} \sum\limits_{m=1}^n p_{lm}^2
		\leq  || \mathbf{P} ||^2 \leq 1, \label{ineq_1}\\
		& \tr \lb \mathbf{P}^2 \rb 
		=  \sum\limits_{i,k = 1}^n p_{ki} p_{ik}
		= || \mathbf{P} ||_2^2 = \tr \mathbf{P} = n - p + 1,
		\label{ineq_2}
	\end{align}
	% für zweite Ungleichung: note that P^2 = P 
	% erste Ungleichung 
	where $||\mathbf{P}||$ denotes the spectral norm of $\mathbf{P}$
	and $||\mathbf{P}||_2$ denotes the Frobenius norm of $\mathbf{P}$. The first inequality in \eqref{ineq_1} is a well-known estimate for general symmetric matrices and can be shown by choosing the unit vectors for the maximum appearing in the definition of the spectral norm, while the equality in \eqref{ineq_2} follows from the fact that $\mathbf{P}^2 = \mathbf{P}$.  
	\subsubsection*{Step 2.1: Calculation of the variance} 
	We begin with a proof of \eqref{cond1}.
	For this purpose, we calculate
	\begin{align}
		\sigma_n ^2 = & \frac{1}{ \rho_n ( n - p + 1 )} \sum\limits_{i=1}^n \E_{i - 1} \left[ 
		Z_{pi}^2
		\right] \nonumber \\
		= & \frac{4}{ \rho_n ( n - p + 1) } \sum\limits_{i=1}^n \E_{i - 1} \left[   \lb  \sum\limits_{k=1}^{i-1} p_{ki} b_{pk} \rb^2 \right] \nonumber \\
		& + \frac{4}{\rho_n ( n - p + 1)} \sum\limits_{i=1}^n \left\{ \lb \E \left[ b_{pi}^3 \right] - \E [ b_{pi} ] \E \left[ b_{pi}^2 \right] \rb 
		 \sum\limits_{k=1}^{i-1} b_{pk} \E_{i-1}[p_{ki}   p_{ii} ] \right\} \ \nonumber \\
		& +\frac{1}{\rho_n ( n - p + 1)} \sum\limits_{i=1}^n \E_{i-1} [ p_{ii}^2]  \E\big [b_{pi}^2 - \E[b_{pi}^2] \big]^2  
		% \nonumber \\ 
%		= & \frac{4}{ \rho_n ( n - p + 1) } \sum\limits_{i=1}^n \E_{i-1} \left[   \lb  \sum\limits_{k=1}^{i-1} p_{ki} b_{pk} \rb^2 \right]
%		+ \frac{4}{\rho_n ( n - p + 1)} \sum\limits_{i=1}^n \left\{ \E \left[ b_{pi}^3 \right] \sum\limits_{k=1}^{i-1} b_{pk} p_{ki}   p_{ii}  \right\}  
%		\nonumber \\
%		& +\frac{1}{\rho_n ( n - p + 1)} \sum\limits_{i=1}^n  p_{ii}^2 \lb \E[b_{pi}^4] - 1 \rb 
\nonumber \\
 = & \frac{4}{ \rho_n ( n - p + 1) } \sum\limits_{i=1}^n \E_{i - 1} \left[   \lb  \sum\limits_{k=1}^{i-1} p_{ki} b_{pk} \rb^2 \right] 
		 + \frac{4\E \left[ b_{p1}^3 \right] }{\rho_n ( n - p + 1)} \sum\limits_{i=1}^n \left\{  
		 \sum\limits_{k=1}^{i-1} b_{pk} p_{ki}   p_{ii}  \right\} \ \nonumber \\
		& +\frac{(\nu_4 - 1)}{\rho_n ( n - p + 1)} \sum\limits_{i=1}^n p_{ii}^2 
		. \label{sigma_n_sq}
	\end{align}
	  Here, we used  that $b_{pk}$ is measurable with respect to $\mathcal{F}_{pi}$ for $k\in\{1, \ldots, i \}$ and $b_{pj}$  is independent of $\mathcal{F}_{pi}$ for $j\in\{i+1, \ldots,n\}$, and $\mathbf{P}$ is measurable with respect to $\mathcal{F}_{pi}$ for all $i \in \{1, \ldots, n\}$.
%	Investigating the random variable $\sigma_n^2$ further, we compute its mean when only integrating with respect to $\bfb_p$ and 
	Moreover, we obtain using \eqref{ineq_2} 
	\begin{align}	
		1
		= &  \rho_n\inv \lb  2 + \frac{\nu_4 - 3}{ n - p + 1} \sum\limits_{i=1}^n  p_{ii}^2 \rb  \nonumber \\ 
	= & \frac{2}{\rho_n ( n - p + 1)} \sum\limits_{\substack{i,k=1, \\ i \neq k}}^n         p_{ki}^2 
		+\frac{\nu_4 - 1}{\rho_n ( n - p + 1)} \sum\limits_{i=1}^n    p_{ii}^2   \nonumber \\
		 = & \frac{4}{\rho_n ( n - p + 1)} \sum\limits_{i=1}^n     \sum\limits_{k=1}^{i-1}     p_{ki}^2 
		+\frac{\nu_4 - 1}{\rho_n ( n - p + 1)} \sum\limits_{i=1}^n  p_{ii}^2 . \label{mean_sigma_n_sq}   
	\end{align}

	Denoting $\nu_4 = 1 + \varepsilon$ for some small $\varepsilon>0$, we note that $\rho_n$ is uniformly bounded away from $0$, since for all $n\in\N$
	\begin{align}
		\rho_n 
		= 2 - \frac{2 - \varepsilon}{n - p + 1} \sum\limits_{i=1}^n  p_{ii}^2 
		\geq 2 - \frac{2 - \varepsilon}{n - p + 1} \sum\limits_{i=1}^n p_{ii}
		= \varepsilon >0. \label{bound_rho}
	\end{align}	
	
%	Then, condition \eqref{cond1} with $\sigma^2=1$ is implied by 
%	\begin{align} \label{cond12}
%		\lim\limits_{n\to\infty} \E [ \sigma_n^2 - 1 ]^2 = 0
%		,
%	\end{align}
%	which we will prove in the following. For this purpose, 
In the following, we will show that \eqref{cond1} holds true. 
	For this purpose, we write using \eqref{sigma_n_sq}, \eqref{mean_sigma_n_sq} and \eqref{bound_rho}
	\begin{align}
		| \sigma_n^2 - 1 | 
		\leq & 
		\frac{4}{ \rho_n ( n - p + 1) } \left| \sum\limits_{i=1}^n \lb   \E_{i-1} \left[     \sum\limits_{k=1}^{i-1} p_{ki} b_{pk} \right]^2 - \sum\limits_{k=1}^{i - 1}     p_{ki}^2  \rb \right| \nonumber \\
		& + \frac{4  \E | b_{p1}|^3  }{\rho_n ( n - p + 1)}  \left| \sum\limits_{i=1}^n \left\{  \sum\limits_{k=1}^{i-1} b_{pk} p_{ki}   p_{ii}  \right\} \right| 
		\nonumber \\ 
		\lesssim & \frac{1}{n - p + 1} \lb \delta_{n,1} + \delta_{n,2} + \delta_{n,3}\rb  ,
		\label{decomp_sigma}
	\end{align}
	where
	\begin{align*}
	\delta_{n,1} = & \left| \sum\limits_{i=1}^n 
	\sum\limits_{1 \leq k < j \leq i - 1} 
	  p_{ki} p_{ji}  b_{pk} b_{pj} \right|, \\
	\delta_{n,2} = & \left| \sum\limits_{i=1}^n  \sum\limits_{k=1}^{i-1}
	\lb b_{pk}^2 - 1  \rb  p_{ki}^2 \right|, \\ 
	\delta_{n,3} = & \left| \sum\limits_{i=1}^n  \sum\limits_{k=1}^{i-1} b_{pk}  p_{ki}   p_{ii}  \right|.  \\
	\end{align*}
	Similarly as in \cite{bhansali2007}, one can show that $ \delta_{n,i} / (n - p + 1) = o_{\PR} (1)$, as $n\to\infty$ for $i\in\{1,2,3\}$, by bounding the second moments of $\delta_{n,1}, \delta_{n,2}, \delta_{n,3}$.  
	Exemplarily, we demonstrate this for the term $\delta_{n,3}$. 
%	For this purpose, we define the upper triangular projection $\tilde{\bfP}\in\R^{n\times n}$ of $\bfP$ with entries 
%	\begin{align*}
%	\tilde{p}_{ij} = \begin{cases}
%	p_{ij} & \textnormal{if } j<i \\
%	0 & \textnormal{else}. 
%	\end{cases}
%		\end{align*}
		Notice that an application of Lemma 2.1 in \cite{bhansali2007} and \eqref{ineq_2} yields
		\begin{align} \label{ineq_tilde_p}
		\lb \sum\limits_{i,i'=1}^n \lb \sum_{k=1}^{\min(i,i')-1} p_{ik} p_{i'k}  \rb^2 \rb^{\frac{1}{2}}
		 \lesssim \sqrt{n - p + 1} || \bfP || \leq \sqrt{n - p + 1.}
		\end{align}
		Using the Cauchy-Schwarz inequality, \eqref{ineq_tilde_p} and \eqref{ineq_2}, 
	\begin{align*}
		\E [ \delta_{n,3}^2 ] 
		& =  \E \left[ \sum_{i,i'=1}^n p_{ii} p_{i'i'} \sum\limits_{k=1}^{\min(i,i')-1} p_{ki} p_{ki'} \right]  \\
		& \leq \E \left[ \lb \sum_{i}^n p_{ii}^2 \rb \lb \sum_{i,i'=1}^n \lb \sum\limits_{k=1}^{\min(i,i')-1} p_{ki} p_{ki'} \rb^2 \rb^{\frac{1}{2}}  \right] \\
		& \lesssim \lb n - p + 1 \rb^{\frac{3}{2}} = o \lb ( n - p + 1)^2 \rb, ~ n\to\infty. 
	\end{align*}
	Proceeding similarly for the remaining terms $\delta_{n,1}$ and $\delta_{n,2}$, we get $\sigma_n^2 = 1 + o_{\PR} (1)$ as $n\to\infty$. 	By an application of Lemma \ref{lem_con_pii} given at the end of this section, the normalizing term $\rho_n$ converges in probability towards $\rho$ as $n\to\infty$. 
	
 \subsubsection*{Step 2.2: Verifying the Lindeberg-type condition \eqref{cond2}} 
	 Using a truncation argument as in \cite{bhansali2007}, it is sufficient to prove \eqref{cond2} under the assumption $ \E [ b_{11}^8 ] < \infty$. 
	Then, we obtain by using \eqref{bound_rho}
	\begin{align*}
	r_n ( \delta) \leq & 
	\frac{1}{(n - p + 1)^2 \rho_n^2 \delta^2} \sum\limits_{i=1}^n \E 
	\left[ Z_{pi}^4\right] 
%	\lesssim &  \frac{1}{(n - p + 1)^2 \delta^2} \sum\limits_{i=1}^n\lb  \E \left[
%	 b_{pi}^4 \lb \sum\limits_{k=1}^{i-1} p_{ki} b_{pk} \rb^4 \right]
%		+  \E \left[ p_{ii}^4 \lb b_{pi}^2 - \E [ b_{pi}^2] \rb^4 \right] \rb \\
	\lesssim    J_1 + J_2 ,
	\end{align*}
	where
	\begin{align*}
	J_1 = & \frac{1}{(n - p + 1)^2 \delta^2} \sum\limits_{i=1}^n  \E \left[
	 b_{pi}^4 \lb \sum\limits_{k=1}^{i-1} p_{ki} b_{pk} \rb^4 \right]
	\\ \lesssim & \frac{1}{(n - p + 1)^2 \delta^2} \sum\limits_{i=1}^n  \E \left[
	  \lb \sum\limits_{j,k=1}^{i-1} p_{ki} p_{ji} b_{pk} b_{pj} \rb^2 \right]
	\\ \lesssim &   \frac{1}{(n - p + 1) \delta^2} \sum\limits_{i=1}^n  \E \left[
	  \lb \sum\limits_{k=1}^{i-1} p_{ki}^2 b_{pk}^2 \rb^2 \right] \\ & 
	  +   \frac{1}{(n - p + 1) \delta^2} \sum\limits_{i=1}^n  \E \left[
	  \lb \sum\limits_{\substack{j,k=1 \\ j <k }}^{i-1} p_{ki} p_{ji} b_{pk} b_{pj} \rb^2 \right]
	,\\
	J_2 =& \frac{1}{(n - p + 1)^2 \delta^2}  \sum\limits_{i=1}^n\E \left[ p_{ii}^4 \lb b_{pi}^2 - \E [ b_{pi}^2] \rb^4 \right] 
	\lesssim \frac{1}{(n - p + 1) \delta^2} \sum\limits_{i=1}^n \E \left[ p_{ii}^4  \right].
	\end{align*}
	This implies using \eqref{ineq_1} and \eqref{ineq_2}
	\begin{align*}
	J_1 + J_2 \lesssim & 
	 \frac{1}{(n - p + 1)^2 \delta^2} \sum\limits_{i=1}^n
	\lb \sum\limits_{j,k=1}^{i -1} \E [ p_{ki}^2 p_{ji}^2 ]  + \E [ p_{ii}^4 ]
	\rb 
	\lesssim \frac{1}{(n - p + 1)^2 \delta^2} \sum\limits_{i,j,k=1}^n \E [ p_{ki}^2 p_{ji}^2 ] \\
	\lesssim  &   \frac{1}{(n - p + 1)^2 \delta^2} \sum\limits_{j,k=1}^n \E \left[ p_{jk}^2 \max\limits_{l=1, \ldots, n} \sum\limits_{m=1}^n p_{lm}^2 \right] \\
	\lesssim &   \frac{1}{(n - p + 1)^2 \delta^2}  \sum\limits_{j,k=1}^n \E [p_{jk}^2]
	= o(1) .
	\end{align*}
	\subsubsection*{Step 3: Conclusion via delta method} 
	In Step 2, we have shown that an appropriately centered and standardized version of $r_{pp}^2$ satisfies a CLT. By applying the delta method and using \eqref{z2}, 
	 we conclude that
	\begin{align*}
	  & \frac{ \sqrt{n - p + 1} }{ \lb \mathbf{\Sigma} \inv \rb_{pp} } \lb 
			 \frac{n - p + 1}{n} \lb \mathbf{\hat{\Sigma}} \inv \rb_{pp} - \lb \mathbf{\Sigma} \inv \rb_{pp}
			\rb 
			 =  \sqrt{n - p + 1}    \lb \frac{n - p + 1}{ r_{pp}^2}  - 1 \rb 
			 \\ & \cond \mathcal{N}(0,\rho) , ~ n\to\infty,
	\end{align*}
	which finishes the proof of Theorem \ref{thm_prec}. 
	\end{proof}

	\subsection{Auxiliary results} \label{sec_aux_thm}
		If $\bfSigma$ is a diagonal matrix, then it holds for $1 \leq q \leq p$
		\begin{align*}
			\lb \hat \bfI \inv \rb_{qq} = \frac{  \lb \hat \bfSigma \inv \rb_{qq} }{\lb  \bfSigma \inv \rb_{qq}}. 		\end{align*}
	This connection can be generalized to the case of dependent coordinates if we assume that the data follows a standard normal distribution. 
	\begin{lemma} \label{lem_cramer_norm}
	If $\bfSigma$ is a general (not necessarily diagonal) $p\times p$ population covariance matrix and $x_{ij} \stackrel{\textnormal{i.i.d.}}{\sim} \mathcal{N}(0,1)$ ($1\leq i \leq p, ~ 1 \leq j \leq n$), then for any $1 \leq q \leq p$
	\begin{align*}
		\frac{  \lb \hat \bfSigma \inv \rb_{qq} }{\lb  \bfSigma \inv \rb_{qq}}
			\stackrel{\mathcal{D}}{=} \lb \hat \bfI \inv \rb_{qq} . 
	\end{align*}
	\begin{proof}[Proof of Lemma \ref{lem_cramer_norm}]
		Let $( \bfSigma^{1/2} ) ^{(-q, \cdot)}$ denote the $(p-1)\times p$ submatrix of $\bfSigma^{1/2}$ where the $q$th row is deleted. Similarly, $( \bfSigma^{1/2} ) ^{(\cdot, -q)}$ denotes the $p\times (p-1)$ submatrix of $\bfSigma^{1/2}$ where the $q$th column is deleted. Using these definitions, we see that 
		\begin{align} \label{c1}
			\hat{\mathbf{\Sigma}}^{(-q)}
			= ( \bfSigma^{1/2} ) ^{(-q, \cdot)} \bfX_n \bfX_n^\top 
			( \bfSigma^{1/2} ) ^{(\cdot, -q)}
			= ( \bfSigma^{1/2} ) ^{(-q, \cdot)} \bfX_n \lb ( \bfSigma^{1/2} ) ^{(-q, \cdot)}  \bfX_n \rb ^\top .
		\end{align}
	Combining \eqref{c1} with the normal assumption, we have that 
		\begin{align*}
		( \bfSigma^{1/2} ) ^{(-q, \cdot)} \bfx_i  \sim \mathcal{N} ( \mathbf{0}, \bfSigma^{(-q)}),~ 1 \leq i \leq n,
		\end{align*}
		where we used that
		\begin{align*}
			( \bfSigma^{1/2} ) ^{(-q, \cdot)}  \lb ( \bfSigma^{1/2} ) ^{(-q, \cdot)}  \rb^\top 
			= ( \bfSigma^{1/2} ) ^{(-q, \cdot)}  ( \bfSigma^{1/2} ) ^{( \cdot, -q )} 
			= \bfSigma^{(-q)}.
		\end{align*}
		This implies that 
		\begin{align*}
			 ( \bfSigma^{(-q)} )^{1/2}  \tilde\bfX_n^{(-q)}  \stackrel{\mathcal{D}}{=}  \tilde{\bfY}_n^{(-q)} .
		\end{align*}
		Using Cramers rule, we get
		\begin{align*} 
			\frac{  \lb \hat \bfSigma \inv \rb_{qq} }{\lb  \bfSigma \inv \rb_{qq}}
			= \frac {  | \mathbf{\Sigma} | }{ | \hat{\mathbf{\Sigma}} | }
		\frac{ | \hat{\mathbf{\Sigma}}^{(-q)} | } {  |\mathbf{\Sigma}^{(-q)} | }
		\stackrel{\mathcal{D}}{=} \frac{| \hat \bfI^{(-q)} | }{ | \hat\bfI | } 
		= \lb \hat \bfI \inv \rb_{qq}. 
		\end{align*}
		The proof of Lemma \ref{lem_cramer_norm} concludes. 
	\end{proof}
	
	\end{lemma}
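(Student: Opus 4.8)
The plan is to reduce the ratio $(\hat\bfSigma\inv)_{qq}/(\bfSigma\inv)_{qq}$ to a single quadratic form in $\hat\bfI\inv$ evaluated at a \emph{deterministic} unit vector, and then to exploit the orthogonal invariance of the law of $\hat\bfI$, which holds precisely because the entries of $\bfX_n$ are i.i.d.\ standard Gaussian. This invariance is exactly the feature that lets us drop the assumption that $\bfSigma$ is diagonal and treat a general population covariance matrix.

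First I would write $\hat\bfSigma = \bfSigma\sq \hat\bfI \bfSigma\sq$ and invert to obtain $\hat\bfSigma\inv = \bfSigma^{-1/2}\hat\bfI\inv\bfSigma^{-1/2}$, using that $\bfSigma\sq$ is symmetric. Setting $\mathbf{v} = \bfSigma^{-1/2}\mathbf{e}_q$, where $\mathbf{e}_q$ is the $q$th standard basis vector of $\R^p$, this gives $(\hat\bfSigma\inv)_{qq} = \mathbf{v}^\top\hat\bfI\inv\mathbf{v}$ and $(\bfSigma\inv)_{qq} = \mathbf{e}_q^\top\bfSigma\inv\mathbf{e}_q = \|\mathbf{v}\|^2$, so that
\[
\frac{(\hat\bfSigma\inv)_{qq}}{(\bfSigma\inv)_{qq}} = \mathbf{u}^\top\hat\bfI\inv\mathbf{u}, \qquad \mathbf{u} = \mathbf{v}/\|\mathbf{v}\|,
\]
with $\mathbf{u}$ a fixed (nonrandom) unit vector determined by $\bfSigma$ and $q$. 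The point of this step is that the whole ratio has been collapsed into one quadratic form in $\hat\bfI\inv$.

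Next I would record the invariance. For any orthogonal $p\times p$ matrix $\mathbf{O}$ the standard Gaussian matrix satisfies $\mathbf{O}^\top\bfX_n \stackrel{\mathcal{D}}{=}\bfX_n$, hence $\mathbf{O}^\top\hat\bfI\mathbf{O} = \frac{1}{n}(\mathbf{O}^\top\bfX_n)(\mathbf{O}^\top\bfX_n)^\top \stackrel{\mathcal{D}}{=}\hat\bfI$ and therefore $\mathbf{O}^\top\hat\bfI\inv\mathbf{O}\stackrel{\mathcal{D}}{=}\hat\bfI\inv$. Choosing an orthogonal $\mathbf{O}$ with $\mathbf{O}\mathbf{e}_q = \mathbf{u}$ (which exists since $\mathbf{u}$ and $\mathbf{e}_q$ are both unit vectors) and applying the measurable map $\mathbf{A}\mapsto \mathbf{e}_q^\top\mathbf{A}\mathbf{e}_q$ then yields
\[
\mathbf{u}^\top\hat\bfI\inv\mathbf{u} = \mathbf{e}_q^\top(\mathbf{O}^\top\hat\bfI\inv\mathbf{O})\mathbf{e}_q \stackrel{\mathcal{D}}{=}\mathbf{e}_q^\top\hat\bfI\inv\mathbf{e}_q = (\hat\bfI\inv)_{qq},
\]
which is the assertion.

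The genuine obstacle is the distributional bookkeeping rather than the algebra. One must be sure the equality in distribution is claimed for the \emph{full} ratio and not merely marginally; the rotational argument handles this automatically, since after the reduction there is no separate ``numerator'' and ``denominator'' whose joint law must be tracked. It is also worth stressing that this step uses exact Gaussianity (full $O(p)$-invariance of the law of $\bfX_n$), which is why the lemma is confined to the normal case and does not extend to the general distributions of Theorem \ref{thm_prec}. As an alternative I could follow a Cramer's-rule route, writing the ratio as $(|\bfSigma|/|\hat\bfSigma|)\,(|\hat\bfSigma^{(-q)}|/|\bfSigma^{(-q)}|)$, noting that $|\hat\bfSigma| = |\bfSigma|\,|\hat\bfI|$ holds deterministically and that $\hat\bfSigma^{(-q)} = (\bfSigma\sq)^{(-q,\cdot)}\bfX_n\bfX_n^\top(\bfSigma\sq)^{(\cdot,-q)}$ with $(\bfSigma\sq)^{(-q,\cdot)}\bfx_i\sim\mathcal{N}(\mathbf{0},\bfSigma^{(-q)})$, so that $(\bfSigma\sq)^{(-q,\cdot)}\bfX_n\stackrel{\mathcal{D}}{=}(\bfSigma^{(-q)})^{1/2}\tilde\bfX_n^{(-q)}$; there the difficulty is precisely the joint law of $|\hat\bfSigma^{(-q)}|$ and $|\hat\bfSigma|$, which the invariance approach sidesteps.
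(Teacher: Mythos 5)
Your proof is correct, and it takes a genuinely different route from the paper's. The paper's own argument is the one you sketch as an ``alternative'' at the end: by Cramer's rule the ratio equals $(|\bfSigma|/|\hat\bfSigma|)\,(|\hat\bfSigma^{(-q)}|/|\bfSigma^{(-q)}|)$, the minor $\hat\bfSigma^{(-q)}$ is identified as the Gram matrix of $(\bfSigma\sq)^{(-q,\cdot)}\bfX_n$, whose columns are $\mathcal{N}(\mathbf{0},\bfSigma^{(-q)})$, and this replaces $|\hat\bfSigma^{(-q)}|/|\bfSigma^{(-q)}|$ in law by $|\hat\bfI^{(-q)}|$. You instead collapse the whole ratio into the single quadratic form $\mathbf{u}^\top\hat\bfI\inv\mathbf{u}$ with the deterministic unit vector $\mathbf{u}=\bfSigma^{-1/2}\mathbf{e}_q/\|\bfSigma^{-1/2}\mathbf{e}_q\|_2$, and then use the orthogonal invariance $\mathbf{O}^\top\hat\bfI\inv\mathbf{O}\stackrel{\mathcal{D}}{=}\hat\bfI\inv$ with $\mathbf{O}\mathbf{e}_q=\mathbf{u}$. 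The gain of your route is exactly the point you flag: the distributional bookkeeping is automatic, because after the reduction a single random matrix is pushed through a single deterministic map. In the determinant route one needs the \emph{joint} law of numerator and denominator --- concretely, using $|\hat\bfSigma|=|\bfSigma|\,|\hat\bfI|$, that the pair $(|\hat\bfSigma^{(-q)}|/|\bfSigma^{(-q)}|,\,|\hat\bfI|)$ has the same law as $(|\hat\bfI^{(-q)}|,\,|\hat\bfI|)$ --- whereas the Gaussian identity displayed in the paper is, as stated, a marginal statement about the minor alone; the joint version does hold, and is proved by the same rotational invariance you invoke (choose an orthogonal $\mathbf{O}$ whose rows other than the $q$th are the rows of $(\bfSigma^{(-q)})^{-1/2}(\bfSigma\sq)^{(-q,\cdot)}$ and use $\mathbf{O}\bfX_n\stackrel{\mathcal{D}}{=}\bfX_n$ together with $|\mathbf{O}|=\pm 1$), but that step is left implicit in the paper. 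What the paper's route buys, in turn, is coherence with the machinery used everywhere else: the Cramer's-rule representation and the resulting determinant ratios are precisely the objects handled by the QR decomposition in the proofs of Theorems \ref{thm_prec} and \ref{thm_joint}, so the lemma comes out as a short byproduct of computations already set up there. Both arguments use exact Gaussianity (full rotational invariance of the law of $\bfX_n$) in an essential way, so, as you note, neither extends to the general distributions of Theorem \ref{thm_prec}.
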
 	
	
	In order to prove asymptotic normality of the quadratic forms appearing in the previous proofs, we make use of the following CLT for martingale difference schemes.
	
		\begin{lemma}[Theorem 35.12 in \cite{billingsley1995}] \label{mds}
		Suppose that for each $n\in\N$, $Z_{n1}, ..., Z_{nr_n}$ form a real martingale difference sequence with respect to the increasing $\sigma$-field $(F_{nj})$ having second moments. If, as $n\to \infty$
		\begin{equation}\label{mds_ass1}
			\sum\limits_{j=1}^{r_n} \E [ Z_{nj}^2 | F_{n,j-1} ] \stackrel{\mathbb{P}}{\to} \sigma^2,
		\end{equation}
		where $\sigma^2>0$, and for each $\varepsilon >0$,
		\begin{equation} \label{lindeberg}
			\sum\limits_{j=1}^{r_n} \E [ Z_{nj}^2 I_{\{|Z_{nj}| >\varepsilon\}} ] \to 0,
		\end{equation}
		then
		\begin{align*}
			\sum\limits_{j=1}^{r_n} Z_{nj} \cond \mathcal{N}(0,\sigma^2).
		\end{align*}
		\end{lemma}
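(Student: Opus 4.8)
The plan is to prove convergence of characteristic functions: it suffices to show that $\E\lb \exp(\mathrm{i} t S_n) \rb \to \exp(-\sigma^2 t^2/2)$ for every fixed real $t$, where $S_n = \sum_{j=1}^{r_n} Z_{nj}$, since pointwise convergence of characteristic functions to that of $\mathcal{N}(0,\sigma^2)$ yields the asserted weak convergence by L\'evy's continuity theorem. Throughout I write $\E_{j-1}[\,\cdot\,] = \E[\,\cdot \mid F_{n,j-1}]$, introduce the conditional variances $\sigma_{nj}^2 = \E_{j-1}[Z_{nj}^2]$ and their partial sums $V_{n,k} = \sum_{j=1}^{k} \sigma_{nj}^2$, so that the hypothesis \eqref{mds_ass1} reads $V_{n,r_n} \conp \sigma^2$. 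First I would extract from the Lindeberg condition \eqref{lindeberg} a uniform-asymptotic-negligibility statement: splitting $Z_{nj}^2$ according to $\{|Z_{nj}|\le\varepsilon\}$ and its complement shows $\max_{j\le r_n}\sigma_{nj}^2 \conp 0$, which will later let me pass from products to exponentials.

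The heart of the argument is a telescoping comparison. I would introduce the auxiliary array $\phi_{n,k} = \exp\!\lb \mathrm{i} t S_{n,k} + \tfrac{t^2}{2} V_{n,k} \rb$ with $S_{n,k}=\sum_{j\le k} Z_{nj}$, so that $\phi_{n,0}=1$ and $|\phi_{n,k}| = \exp(\tfrac{t^2}{2}V_{n,k})$. Writing $\phi_{n,r_n} - 1 = \sum_{k=1}^{r_n}(\phi_{n,k}-\phi_{n,k-1})$ and using that $\phi_{n,k-1}$ and $\sigma_{nk}^2$ are $F_{n,k-1}$-measurable, taking conditional expectations gives
\[
\E_{k-1}[\phi_{n,k}-\phi_{n,k-1}] = \phi_{n,k-1}\,e^{\frac{t^2}{2}\sigma_{nk}^2}\lb \E_{k-1}[e^{\mathrm{i} t Z_{nk}}] - e^{-\frac{t^2}{2}\sigma_{nk}^2}\rb .
\]
A Taylor expansion of both exponentials, using $\E_{k-1}[Z_{nk}]=0$ (the martingale-difference property) and $\E_{k-1}[Z_{nk}^2]=\sigma_{nk}^2$, bounds the bracketed difference by a constant multiple of $\E_{k-1}\lb |Z_{nk}|^3 I_{\{|Z_{nk}|\le\varepsilon\}} + Z_{nk}^2 I_{\{|Z_{nk}|>\varepsilon\}}\rb + \sigma_{nk}^4$, hence by $C\lb \varepsilon\,\sigma_{nk}^2 + \E_{k-1}[Z_{nk}^2 I_{\{|Z_{nk}|>\varepsilon\}}] + \sigma_{nk}^4\rb$. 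Summing over $k$ and taking total expectations, the three contributions are controlled by $\varepsilon\,\E[V_{n,r_n}]$, the Lindeberg sum \eqref{lindeberg}, and $\E\lb (\max_k\sigma_{nk}^2)\,V_{n,r_n}\rb$, each of which vanishes upon letting $n\to\infty$ and then $\varepsilon\to0$.

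The hard part will be that the prefactor $\phi_{n,k-1}e^{t^2\sigma_{nk}^2/2}$ need not be bounded, since \eqref{mds_ass1} gives only convergence in probability and not uniform boundedness of $V_{n,r_n}$. I would resolve this by a stopping-time truncation: define $\tau_n$ as the first index $k$ at which $V_{n,k}$ exceeds $\sigma^2+1$, capped at $r_n$, and carry out the estimate above for the stopped sums $S_{n,\tau_n}$ and $V_{n,\tau_n}$. On the stopped array $|\phi_{n,k-1}|\le e^{t^2(\sigma^2+1)/2}$ is bounded uniformly (the single boundary increment being negligible by the uniform-asymptotic-negligibility bound), and since $V_{n,r_n}\conp\sigma^2$ one has $\PR(\tau_n < r_n)\to 0$, so the stopped and unstopped characteristic functions differ by $o(1)$. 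This yields $\E[e^{\mathrm{i} t S_n}] - \E[e^{-\frac{t^2}{2}V_{n,r_n}}] \to 0$.

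Finally, since $V_{n,r_n}\conp\sigma^2$ and the map $x\mapsto e^{-t^2 x/2}$ is bounded and continuous on $[0,\infty)$, the bounded convergence theorem gives $\E[e^{-\frac{t^2}{2}V_{n,r_n}}]\to e^{-\frac{t^2}{2}\sigma^2}$. Combining the two displays shows $\E[e^{\mathrm{i} t S_n}]\to e^{-\frac{t^2}{2}\sigma^2}$ for every $t$, which is the characteristic function of $\mathcal{N}(0,\sigma^2)$, and the conclusion $\sum_{j=1}^{r_n} Z_{nj}\cond \mathcal{N}(0,\sigma^2)$ follows.
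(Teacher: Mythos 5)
This lemma is not proven in the paper at all: it is imported verbatim as Theorem 35.12 of Billingsley (1995), so the only meaningful comparison is with the standard textbook argument. Your proposal is essentially that classical proof (the McLeish/Billingsley characteristic-function route): the exponential compensator $\phi_{n,k}=\exp(\mathrm{i}tS_{n,k}+\tfrac{t^2}{2}V_{n,k})$, the telescoping conditional estimate, the Taylor expansion split at level $\varepsilon$ against the Lindeberg sum \eqref{lindeberg}, a stopping-time truncation to make $V$ bounded, and the final passage from $V_{n,r_n}\conp\sigma^2$ in \eqref{mds_ass1} to the Gaussian characteristic function. The architecture is sound, and your preliminary observation $\max_{j\le r_n}\sigma_{nj}^2\conp 0$ does follow from \eqref{lindeberg} via $\sigma_{nj}^2\le\varepsilon^2+\E[Z_{nj}^2 I_{\{|Z_{nj}|>\varepsilon\}}\mid F_{n,j-1}]$ and taking expectations of the maximum.

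One detail needs repair. You define $\tau_n$ as the first index at which $V_{n,k}$ \emph{exceeds} $\sigma^2+1$ and dismiss the boundary increment as negligible by uniform asymptotic negligibility. But that negligibility is a statement in probability, whereas your telescoping error bounds are summed in expectation and require the prefactor $|\phi_{n,k-1}|e^{t^2\sigma_{nk}^2/2}=\exp(\tfrac{t^2}{2}V_{n,k})$ to be bounded \emph{almost surely} for $k\le\tau_n$; the overshoot $\sigma_{n,\tau_n}^2$ is not a.s. bounded, so with your definition the prefactor is not either. The fix is standard and free: $V_{n,k}=\sum_{j\le k}\E[Z_{nj}^2\mid F_{n,j-1}]$ is $F_{n,k-1}$-measurable and nondecreasing, i.e. predictable, so you may stop \emph{before} the threshold is crossed, taking $\{\tau_n\ge k\}=\{V_{n,k}\le\sigma^2+1\}\in F_{n,k-1}$. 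Then $Z_{nk}I_{\{\tau_n\ge k\}}$ is still a martingale difference array, $V_{n,k}\le\sigma^2+1$ holds a.s. on the stopped range, $\PR(\tau_n<r_n)=\PR(V_{n,r_n}>\sigma^2+1)\to0$ by \eqref{mds_ass1}, and the fourth-moment contribution is genuinely controlled in expectation, since $\sum_{k\le\tau_n}\sigma_{nk}^4\le\bigl(\max_{k\le\tau_n}\sigma_{nk}^2\bigr)(\sigma^2+1)$ and $\E\bigl[\max_{k\le\tau_n}\sigma_{nk}^2\bigr]\le\varepsilon^2+o(1)$. With this modification every step of your outline goes through, and the final bounded-convergence step is as you state it.
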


	We conclude this section by proving the following lemma, which was used in the proof of Theorem \ref{thm_prec} and provides the limiting variance. 
	\begin{lemma} \label{lem_con_pii}
	It holds
	\begin{align*}
		\rho_n \conp \rho, ~n\to\infty,
	\end{align*}
	where $\rho$ is defined in Theorem \ref{thm_prec} and $\rho_n$ in \eqref{def_rho_n}.
	\end{lemma}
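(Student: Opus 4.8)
The plan is to reduce the claim $\rho_n \conp \rho$ to the single scalar statement
\[
T_n := \frac{1}{n-p+1}\sum_{i=1}^n p_{ii}^2 \conp 1-y,
\]
since $\rho_n = 2 + (\nu_4-3)T_n$ by \eqref{def_rho_n} (and if $\nu_4=3$ there is nothing to prove). I would first record the two structural facts I intend to exploit: by \eqref{ineq_2} one has $\sum_{i=1}^n p_{ii} = \tr\bfP = n-p+1$, and $0\le p_{ii}\le 1$ because $\bfP$ is an orthogonal projection. Moreover, permuting the columns of $\bfX_n$ leaves the law of $\bfX_n$ invariant while conjugating $\bfP$ by the corresponding permutation matrix, so $p_{11},\dots,p_{nn}$ are exchangeable; in particular $\E[p_{ii}] = \tfrac1n\E[\tr\bfP] = (n-p+1)/n$ for every $i$. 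I claim it then suffices to prove the single-entry (leverage) concentration
\[
p_{11}\conp 1-y .
\]
Indeed, using exchangeability and the uniform bound $p_{ii}^2\le1$, one gets $\E[\tfrac1n\sum_i p_{ii}^2]=\E[p_{11}^2]\to(1-y)^2$ and $\Var(\tfrac1n\sum_i p_{ii}^2)=\tfrac1n\Var(p_{11}^2)+\tfrac{n-1}{n}\cov(p_{11}^2,p_{22}^2)$, where the first term is $O(1/n)$ and the second tends to $0$ because $(p_{11},p_{22})\conp(1-y,1-y)$ together with boundedness forces $\E[p_{11}^2p_{22}^2]-\E[p_{11}^2]\E[p_{22}^2]\to0$. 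Chebyshev's inequality yields $\tfrac1n\sum_i p_{ii}^2\conp(1-y)^2$, whence $T_n=\tfrac{n}{n-p+1}\cdot\tfrac1n\sum_i p_{ii}^2\conp \tfrac{1}{1-y}(1-y)^2=1-y$.

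It remains to prove $p_{11}\conp 1-y$. Write $\bfB := \tilde{\mathbf X}_{n,p-1}\in\R^{(p-1)\times n}$ with columns $\mathbf c_1,\dots,\mathbf c_n$, so that $\mathbf c_i$ collects the first $p-1$ coordinates of $\bfx_i$ and has i.i.d.\ components with mean $0$, variance $1$ and fourth moment $\nu_4$. By \eqref{def_P}, $\bfP = \bfI_n - \bfB^\top(\bfB\bfB^\top)\inv\bfB$. Setting $\bfA := \bfB\bfB^\top=\sum_{i=1}^n\mathbf c_i\mathbf c_i^\top$ and $\bfA_{-i}:=\bfA-\mathbf c_i\mathbf c_i^\top$, the Sherman--Morrison formula gives $1-p_{ii}=\mathbf c_i^\top\bfA\inv\mathbf c_i=\xi_i/(1+\xi_i)$, that is
\[
p_{ii}=\frac{1}{1+\xi_i},\qquad \xi_i:=\mathbf c_i^\top \bfA_{-i}\inv\mathbf c_i ,
\]
where $\bfA_{-i}$ is independent of $\mathbf c_i$ and almost surely invertible (since $n-1\ge p-1$). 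I would then show $\xi_i\conp y/(1-y)$, which gives $p_{ii}\conp 1-y$ by continuous mapping.

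The convergence of $\xi_i$ rests on two ingredients. First, a concentration bound for quadratic forms in the spirit of \cite{bhansali2007}, applied conditionally on $\bfA_{-i}$, yields
\[
\E\big[(\xi_i-\tr\bfA_{-i}\inv)^2\,\big|\,\bfA_{-i}\big]\le C\,\nu_4\,\tr\big(\bfA_{-i}^{-2}\big)
\]
for an absolute constant $C$. Second, $\tfrac{1}{n-1}\bfA_{-i}$ is the sample covariance matrix of the $n-1$ i.i.d.\ vectors $(\mathbf c_j)_{j\ne i}$ in $\R^{p-1}$, so its empirical spectral distribution converges to the Marchenko--Pastur law $F_y$ with ratio $(p-1)/(n-1)\to y\in[0,1)$ \citep{mp1967}. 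Because $y<1$ and $\nu_4<\infty$, the Bai--Yin law for the smallest eigenvalue guarantees that this smallest eigenvalue is bounded away from $0$ with probability tending to one; on this event $x\mapsto1/x$ is bounded and continuous on the spectrum, so, using $\int x\inv\,dF_y(x)=1/(1-y)$,
\[
\tr\bfA_{-i}\inv=\frac{1}{n-1}\,\tr\big[\big(\tfrac{1}{n-1}\bfA_{-i}\big)\inv\big]\conp \frac{y}{1-y}.
\]
The same lower bound gives $\|\bfA_{-i}\inv\|=O(1/n)$, hence $\tr(\bfA_{-i}^{-2})\le(p-1)\|\bfA_{-i}\inv\|^2=O(1/n)\to0$, so the conditional variance above is $o_\PR(1)$. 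Combining the two ingredients gives $\xi_i=\tr\bfA_{-i}\inv+o_\PR(1)\conp y/(1-y)$, as required.

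I expect the main obstacle to be precisely the single-entry concentration $p_{11}\conp 1-y$ under the sole assumption $\nu_4<\infty$, and within it the control of $\tr\bfA_{-i}\inv$ near the hard edge of the spectrum. This is where both hypotheses enter crucially: $y<1$ and $\E[x_{11}^4]<\infty$ are exactly what make $1/x$ integrable against the limiting law and bound $\|\bfA_{-i}\inv\|$ through the Bai--Yin lower bound on the smallest eigenvalue, while the quadratic-form fluctuation bound likewise uses only four moments. By contrast, the passage from the single entry to the average $\tfrac1n\sum_i p_{ii}^2$ is routine once $p_{11}\conp1-y$ is in hand, relying only on the uniform bound $p_{ii}\in[0,1]$ and exchangeability.
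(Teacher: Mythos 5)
Your proposal is correct in substance but takes a genuinely different route from the paper. The paper works directly with the averaged quantity $\frac{1}{n}\sum_{i=1}^n p_{ii}^2$: for $y=0$ it uses the elementary bound $\frac{1}{n}\sum_{i=1}^n\E[(1-p_{ii})^2]\le \frac{1}{n}\tr(\bfI-\bfP)=(p-1)/n\to 0$, and for $y\in(0,1)$ it invokes Theorem 3.2 of Anatolyev and Yaskov (2017), which states precisely that $\frac{1}{n}\sum_{i=1}^n(1-p_{ii}-y)^2\conp 0$; the lemma then follows from $\tr\bfP=n-p+1$ and simple algebra. You instead prove the stronger single-entry (leverage) concentration $p_{11}\conp 1-y$ from scratch, via the Sherman--Morrison/leave-one-out identity $p_{ii}=(1+\xi_i)\inv$, a conditional variance bound for the quadratic form $\xi_i$, and Marchenko--Pastur plus Bai--Yin to identify $\tr \mathbf{A}_{-i}\inv \conp y/(1-y)$; the exchangeability-plus-boundedness step that upgrades the single entry to the average is routine and correct, as you say. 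In effect you re-prove the content of the cited Anatolyev--Yaskov input (leave-one-out is also the standard technique behind such results), which makes your argument self-contained modulo classical random matrix theory and yields a stronger intermediate statement; the paper's version is shorter and, for $y=0$, needs no spectral tools at all.

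The one place where your argument has a real gap is the case $y=0$. The Marchenko--Pastur and Bai--Yin theorems you invoke are formulated for $p\to\infty$ with $p/n\to y\in(0,1)$; in the regime $y=0$ the dimension may stay bounded (so there is no ESD limit to speak of), or grow with $p/n\to 0$, where control of the smallest eigenvalue requires separate results (e.g.\ on $\|\tfrac1n \bfX_n\bfX_n^\top-\bfI\|$ when $p/n\to 0$) rather than the classical Bai--Yin statement. Fortunately this case can be patched in one line without any spectral theory: since $1-p_{ii}\ge 0$ and, by exchangeability, $\E[1-p_{11}]=\tfrac1n\tr(\bfI-\bfP)=(p-1)/n\to 0$, Markov's inequality already gives $p_{11}\conp 1=1-y$ when $y=0$. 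With that modification your proof is complete for all $y\in[0,1)$.
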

	
	\begin{proof} [Proof of Lemma \ref{lem_con_pii}]
	Assume that $y = 0.$ For this case, we note that
	\begin{align} 
		\frac{1}{n} \sum\limits_{i=1}^n p_{ii}^2
		& = \frac{1}{n} \sum\limits_{i=1}^n \lb 1 - p_{ii}\rb^2
		- 1 + \frac{2}{n} \sum\limits_{i=1}^n p_{ii}
		=  \frac{2 ( n - p + 1 ) }{n} - 1 + o_{\mathbb{P}}(1) \nonumber \\
		& = 1 + o_{\mathbb{P}}(1) 
		,~n\to\infty, \label{z3}
	\end{align}
	where we used
	\begin{align*}
		\frac{1}{n} \sum\limits_{i=1}^n \E (1 - p_{ii})^2 \leq 
		\frac{1}{n} \sum\limits_{i=1}^n \E [1 - p_{ii}] = \frac{1}{n} \tr ( \mathbf{I} - \mathbf{P} ) = \frac{p- 1}{n} = o(1), ~n\to\infty. 
	\end{align*}
	Then, \eqref{z3} implies
	\begin{align*}
		\rho_n = 2 + \frac{(\nu_4 - 3) n}{n - p + 1} + o_{\mathbb{P}} (1) = \nu_4 - 1 =  \rho. 
	\end{align*}
	Let $y \in ( 0,1)$. 
	Then we have from Theorem 3.2 in \cite{anatolyev_yaskov_2017}
	%Condition A von Anatolyev ist erfüllt wegen Theorem 5.11 in Bai und Silverstein (2010).
	\begin{align*}
		\frac{1}{n} \sum\limits_{i=1}^n (1 - p_{ii} - y)^2 \conp 0, ~ n\to\infty,
\end{align*}	 
	which implies
	\begin{align*}
		\frac{1}{n} \sum\limits_{i=1}^n p_{ii}^2 
		& =  \frac{1}{n} \sum_{i=1}^n ( 1 - p_{ii} - y)^2 
		- (1-y)^2 + \frac{2 ( 1 -y )}{n} \sum\limits_{i=1}^n p_{ii} \\
		& = \frac{2 ( 1 -y ) ( n - p + 1) }{n} - (1 - y)^2 + o_{\mathbb{P}}(1) 
		 = ( 1 - y)^2 + o_{\mathbb{P}}(1), ~ n\to\infty. 
	\end{align*}
	We conclude for $n\to\infty$
	\begin{align*}
		\rho_n = 2 + \frac{(\nu_4 - 3) (1-y)^2 n}{n - p + 1} + o_{\mathbb{P}}(1) 
		= 2 + (\nu_4 -3) ( 1 -y) + o_{\mathbb{P}}(1) = \rho + o_{\mathbb{P}}(1).
	\end{align*}
	\end{proof}

\section{Proof of Theorem \ref{thm_joint}} \label{sec_proof_joint_conv}

\begin{proof}[Proof of Theorem \ref{thm_joint}] 
Since the distribution of $\hat\bfI\inv$ is invariant under interchanging rows of $\mathbf{X}_n$, we have 
	\begin{align*}
	\lb \frac{ \lb \hat{ \mathbf{\Sigma}} \inv \rb_{q_1,q_1} } { \lb \mathbf{\Sigma} \inv \rb_{q_1,q_1} } , 
		\frac{ \lb \hat{ \mathbf{\Sigma}} \inv \rb_{q_2,q_2} } { \lb \mathbf{\Sigma} \inv \rb_{q_2,q_2} }
	\rb
		& =  \lb  \lb \hat{ \mathbf{I}} \inv \rb_{q_1,q_1} ,
		\lb \hat{ \mathbf{I}} \inv \rb_{q_2,q_2} 		\rb 
		\stackrel{\mathcal{D}}{=} 
		\lb  \lb \hat{ \mathbf{I}} \inv \rb_{pp} ,
		\lb \hat{ \mathbf{I}} \inv \rb_{p-1,p-1} 		\rb  .
		\end{align*}
Thus, we may assume $q_1=p$ and $q_2=p - 1$ without loss of generality. We define
\begin{align} \label{def_Q}
		\bfQ(p) = \frac{\bfP(p-2) \bfb_p \bfb_p^\top \bfP(p-2) }{\bfb_p^\top \bfP(p-2) \bfb_p}.
	\end{align}
	 Similar to the proof of Theorem \ref{thm_prec}, we start by investigating the asymptotic properties of 
	 	 \begin{align*}
	 	 W_n  = & \Bigg\{ \frac{1}{\sqrt{n - p + 1} } \lb n \lb \frac {\lb  \bfSigmahat \inv \rb_{pp}}{\lb  \bfSigma \inv \rb_{pp}}\rb \inv - (n - p + 1) \rb , \\
	 	 &
	 	 \frac{1}{\sqrt{n - p + 1} } \lb n \lb \frac {\lb  \bfSigmahat \inv \rb_{p-1,p-1}}{\lb  \bfSigma \inv \rb_{p-1,p-1}}\rb \inv - (n - p + 1) \rb
			\Bigg\}^\top \\ 
	 	 = & \Bigg\{ \frac{1}{\sqrt{n - p + 1} } \lb n \lb  \bfIhat\inv \rb_{pp}\inv - (n - p + 1) \rb , \\ & 
	 	 \frac{1}{\sqrt{n - p + 1} } \lb n \lb  \bfIhat\inv \rb_{p-1,p-1}\inv - (n - p + 1) \rb
			\Bigg\}^\top \\
			= &  \frac{1}{\sqrt{n - p + 1 }} \Bigg\{ \bfb_p^\top \bfP(p-1) \bfb_p -  ( n - p + 1) , \\ & \bfb_{p-1}^\top ( \bfP(p-2) - \bfQ(p) ) \bfb_{p-1} - (n - p + 1) \Bigg\}^\top,
	 	 \end{align*}
	 	 where we used Lemma \ref{lem_rep_inv_quad_form}.  For the following analysis, we will use the fact that it is a projection matrix of rank one and independent of $\bfb_{p-1}$.
%	 	In the following, which can be approximated by a sum over a martingale difference scheme.
%	Finally, we conclude via Cramérs rule and the multivariate delta method. 
	 From now on, the proof is divided in several steps. 
\subsubsection*{Approximation and martingale difference scheme}
	Note that for any rank-one projection matrix $\bfQ\in \R^{n \times n}$ independent of $\bfb_p$, we have
		\begin{align*}
			\Var ( \bfb_p^\top \bfQ \bfb_p ) 
			% = 2 \tr \mathbf{Q}^2 + (\nu_4 - 3) \tr \mathbf{Q}^{\odot 2}
			 \lesssim 1 ~ \forall n\in\N,
\end{align*}		 
	and consequently, by Slutsky's lemma, it is sufficient to investigate
	\begin{align*}
		 W_{n}^{(1)} & = \frac{1}{\sqrt{n - p + 1 }} \left\{ \bfb_p^\top \bfP(p-2) \bfb_p -  ( n - p + 2) , \bfb_{p-1}^\top \bfP(p-2) \bfb_{p-1} - (n - p + 2 )  \right\}^\top \\  & = W_n + o_{\mathbb{P}} (1) .
	\end{align*}
	Throughout the rest of this proof, we denote $\bfP(p-2) = \bfP = (p_{ij})_{1 \leq i,j \leq n}$. 
	By an application of the Cramer-Wold device, we note that it is sufficient to prove a one-dimensional central limit theorem for
	\begin{align*}
		W_n^{(2)} = \frac{1}{\sqrt{n - p + 1 }} \left\{ a \lb  \bfb_p^\top \bfP \bfb_p -  ( n - p + 2) \rb + b \lb \bfb_{p-1}^\top \bfP\bfb_{p-1} - (n - p + 2 )  \rb \right\}^\top, ~a,b\in\R,
	\end{align*}
	in order to ensure that the vector $W_n^{(1)}$ converges to a two-dimensional normal distribution. We write
	\begin{align*}
		\frac{1}{\sqrt{\rho_n}} W_n^{(2)} = & \frac{1}{\sqrt{ (n - p + 1 ) \rho_n }}\sum\limits_{i=1}^n W_{pi} ,
	\end{align*}
	where
	\begin{align*}
		W_{pi} = & 
		 a\lb 2 b_{pi} \sum\limits_{k=1}^{i-1} p_{ki} b_{pk} 
		+  p_{ii} \lb b_{pi}^2 - 1
		\rb 	\rb 	
		+b \lb 2 b_{p-1,i} \sum\limits_{k=1}^{i-1} p_{ki} b_{p-1,k} 
		+  p_{ii} \lb b_{p-1,i}^2 - 1 
		\rb 	\rb	 , \\
		\rho_n  = & 2 + \frac{\nu_4 - 3}{ n - p + 1} \sum\limits_{i=1}^n  p_{ii}^2. 
	\end{align*} 
	For $p\in \N$, $1\leq i\leq n$, let $\mathcal{A}_{pi}$ denote the $\sigma$ field generated by  $\{ \mathbf{b}_1, \ldots, \mathbf{b}_{p-2}\} \cup \{ b_{pk}, b_{p-1,k} : 1 \leq k \leq i \}$.
	Similar to in the proof of Theorem \ref{thm_prec}, one can show that $(W_{pi})_{1 \leq i \leq n}$ forms a martingale difference sequence with respect to the $\sigma$-fields $(\mathcal{A}_{pi})_{1\leq i \leq n}$ for each $p\in\N$.
	In order to apply the central limit theorem given in Lemma \ref{mds}, we need to verify the conditions \eqref{mds_ass1} and \eqref{lindeberg}. 

\subsubsection*{Calculation of the variance}
We begin with a proof of condition \eqref{mds_ass1}. For simplicity, we write $\mathcal{A}_{pi}=\mathcal{A}_i$ for $1 \leq i \leq n$ and $p\in\N.$
Note that
\begin{align*}
	& \frac{1}{\rho_n (n - p + 1)} \sum\limits_{i=1}^n \E [ W_{pi}^2 | \mathcal{A}_{i-1} ] \\
	= &  \frac{a^2}{\rho_n (n - p + 1)} \E \left[ \lb 2 b_{pi} \sum\limits_{k=1}^{i-1} p_{ki} b_{pk} 
		+  p_{ii} \lb b_{pi}^2 - 1
		\rb\rb^2 \Big| \mathcal{A}_{i-1}\right] \\
		& +  \frac{b^2}{\rho_n (n - p + 1)} \E \left[ \lb 2 b_{p-1,i} \sum\limits_{k=1}^{i-1} p_{ki} b_{p-1,k} 
		+  p_{ii} \lb b_{p-1,i}^2 - 1 
		\rb  \rb^2 \Big| \mathcal{A}_{i-1} \right] \\
		& + \frac{2ab}{\rho_n (n - p + 1)}  \E \Bigg[ \lb 2 b_{p-1,i} \sum\limits_{k=1}^{i-1} p_{ki} b_{p-1,k} 
		+  p_{ii} \lb b_{p-1,i}^2 - 1 
		\rb  \rb \\ 
		& \times \lb 2 b_{pi} \sum\limits_{k=1}^{i-1} p_{ki} b_{pk} 
		+  p_{ii} \lb b_{pi}^2 - 1
		\rb\rb \Big| \mathcal{A}_{i-1} \Bigg] \\
		= &  \frac{a^2}{\rho_n (n - p + 1)} \E \left[ \lb 2 b_{pi} \sum\limits_{k=1}^{i-1} p_{ki} b_{pk} 
		+  p_{ii} \lb b_{pi}^2 - 1
		\rb\rb^2 \Big| \mathcal{A}_{i-1}\right] \\
		& +  \frac{b^2}{\rho_n (n - p + 1)} \E \left[ \lb 2 b_{p-1,i} \sum\limits_{k=1}^{i-1} p_{ki} b_{p-1,k} 
		+  p_{ii} \lb b_{p-1,i}^2 - 1 
		\rb  \rb^2 \Big| \mathcal{A}_{i-1}\right] \\
		= & a^2 + b^2 + o_{\mathbb{P}}(1) , ~ n\to\infty,
\end{align*}
where we used \eqref{cond1} from the proof of Theorem \ref{thm_prec}. 

\subsubsection*{Verification of the Lindeberg-type condition}

For a proof of condition \eqref{lindeberg}, we use the results from Step 2.2 in the proof of Theorem \ref{thm_prec} and obtain
\begin{align*}
	& \frac{1}{ \rho_n ( n - p + 1 ) } \sum\limits_{i=1}^n \E 
	\left[ W_{pi}^2 I_{\{ |W_{pi}| \geq \delta \sqrt{ ( n - p + 1 ) \rho_n } \} }\right] 
	\leq  \frac{1}{(n - p + 1)^2 \rho_n^2 \delta^2} \sum\limits_{i=1}^n \E 
	\left[ W_{pi}^4\right] \\
	\lesssim & \frac{a^4}{(n - p + 1)^2 \rho_n^2 \delta^2} \sum\limits_{i=1}^n \E 
	\left[ \lb 2 b_{pi} \sum\limits_{k=1}^{i-1} p_{ki} b_{pk} 
		+  p_{ii} \lb b_{pi}^2 - 1
		\rb\rb^4 \right] \\
		+&  \frac{b^4}{(n - p + 1)^2 \rho_n^2 \delta^2} \sum\limits_{i=1}^n \E 
	\left[ \lb 2 b_{p-1,i} \sum\limits_{k=1}^{i-1} p_{ki} b_{p-1,k} 
		+  p_{ii} \lb b_{p-1,i}^2 - 1 
		\rb  \rb^4 \right] 
		=  o(1), ~ n\to\infty.
\end{align*}
\end{proof} 

\subsubsection*{Conclusion via delta method}
Summarizing the steps above, we obtain from Lemma \ref{mds}
\begin{align*}
	W_n \cond \mathcal{N}_2 ( \mathbf{0}, \rho \mathbf{I}_2 ) , n\to\infty. 
\end{align*}
	By an application of the multivariate delta method, we have 
	\begin{align*}
			( Z_{n,p} , Z_{n,p-1} )^\top 
			= & \Bigg\{  \sqrt{n - p + 1} \lb 
			 \frac{n - p + 1}{ \bfb_p^\top \bfP(p-1) \bfb_p } - 1
			\rb ,  \\
			 & \sqrt{n - p + 1} \lb 
			 \frac{n - p + 1}{\bfb_{p-1}^\top ( \bfP(p-2) - \bfQ(p) ) \bfb_{p-1}} -1
			\rb 
			\Bigg\} ^\top 
			\\ \cond  & \mathcal{N}_2 ( \mathbf{0}, \rho \mathbf{I}_2 ) , n\to\infty. 
	\end{align*}

	\subsection{Auxiliary results}

The following lemma gives a concrete representation for any diagonal element of the sample precision matrix in terms of the entries of the triangular matrix $\mathbf{R}$.
\begin{lemma} \label{lem_formula_inv}
For $1 \leq q \leq p$, it holds 
	\begin{align*}
		n \lb \bfIhat\inv \rb_{qq}\inv 
		= r_{qq}^2 \prod\limits_{i=q+1}^p \frac{r_{ii}^2}{r_{ii,q}^2},
	\end{align*}
	where the matrix $\bfR$ is defined in the proof of Theorem \ref{thm_prec} and, 
	\begin{align*}
		r_{ii,q}^2 = \bfb_i^\top \bfP (i-1,q) \bfb_i, ~1 \leq i \neq q \leq p.
	\end{align*}
	Here, $\bfP(i-1,q)$ denotes the projection matrix on the orthogonal complement of span($\{\bfb_1, \ldots, \bfb_{i-1}\} \setminus \{ \bfb_q \} $ ). 
	In particular, if $q=p-1$, we obtain 
	\begin{align*}
		n \lb \bfIhat\inv \rb_{p-1,p-1}\inv 
		= \frac{r_{pp}^2 r_{p-1,p-1}^2}{\bfb_p^\top \bfP(p-2) \bfb_p}.
	\end{align*}

\end{lemma}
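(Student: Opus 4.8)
The plan is to reduce the statement, via Cramer's rule, to a ratio of two Gram determinants and then to evaluate each of them through the Gram--Schmidt factorization already exploited in Step~1 of the proof of Theorem~\ref{thm_prec}.

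First I would apply the Cramer-rule identity $(\hat\bfI\inv)_{qq} = |\hat\bfI^{(-q)}|/|\hat\bfI|$ recalled in the introduction. Since $\hat\bfI = \tfrac1n \bfX_n\bfX_n^\top$ is of size $p\times p$ while $\hat\bfI^{(-q)} = \tfrac1n \tilde\bfX_n^{(-q)}(\tilde\bfX_n^{(-q)})^\top$ is of size $(p-1)\times(p-1)$, the powers $n^{-p}$ and $n^{-(p-1)}$ pulled out of the two determinants combine with the prefactor $n$ to cancel the scaling completely, leaving
\[
n\lb \hat\bfI\inv \rb_{qq}\inv = \frac{|\bfX_n\bfX_n^\top|}{\left|\tilde\bfX_n^{(-q)}(\tilde\bfX_n^{(-q)})^\top\right|}.
\]

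Next I would evaluate both Gram determinants by the identity that a Gram determinant of an ordered family of vectors equals the product of the squared norms of their successive Gram--Schmidt residuals; this is exactly the content of the QR-computation in Step~1 of Theorem~\ref{thm_prec} and of \eqref{diag_r_quad_form}. For the numerator, processing the rows in their natural order $\bfb_1,\ldots,\bfb_p$ gives $|\bfX_n\bfX_n^\top| = \prod_{i=1}^p r_{ii}^2$ with $r_{ii}^2 = \bfb_i^\top\bfP(i-1)\bfb_i$. For the denominator, the rows of $\tilde\bfX_n^{(-q)}$ are $\bfb_1,\ldots,\bfb_{q-1},\bfb_{q+1},\ldots,\bfb_p$ in this order, and the crux of the argument is to read off the residuals correctly: for an index $i<q$ the preceding rows are unchanged, so the residual norm is still $r_{ii}^2$, whereas for $i>q$ the preceding rows are $\{\bfb_1,\ldots,\bfb_{i-1}\}\setminus\{\bfb_q\}$, so the residual is the projection of $\bfb_i$ onto the orthogonal complement of their span, which by definition yields $r_{ii,q}^2 = \bfb_i^\top\bfP(i-1,q)\bfb_i$. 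Both cases are captured uniformly by $r_{ii,q}^2$, since $\bfP(i-1,q) = \bfP(i-1)$ whenever $i<q$, so that
\[
\left|\tilde\bfX_n^{(-q)}(\tilde\bfX_n^{(-q)})^\top\right| = \prod_{\substack{i=1 \\ i\neq q}}^p r_{ii,q}^2 = \prod_{i=1}^{q-1} r_{ii}^2 \prod_{i=q+1}^p r_{ii,q}^2.
\]

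Forming the ratio then cancels the common block $\prod_{i<q} r_{ii}^2$ and leaves $r_{qq}^2 \prod_{i=q+1}^p r_{ii}^2/r_{ii,q}^2$, which is the asserted formula. For the special case $q=p-1$ only the factor $i=p$ survives the product, and I would conclude by observing that $\bfP(p-1,p-1)$ projects onto the orthogonal complement of $\operatorname{span}(\{\bfb_1,\ldots,\bfb_{p-1}\}\setminus\{\bfb_{p-1}\}) = \operatorname{span}(\bfb_1,\ldots,\bfb_{p-2})$ and therefore coincides with $\bfP(p-2)$, giving $r_{pp,p-1}^2 = \bfb_p^\top\bfP(p-2)\bfb_p$. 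I expect the only genuine obstacle to be the bookkeeping in the previous paragraph: one must verify that deleting $\bfb_q$ leaves the first $q-1$ Gram--Schmidt residuals untouched while shifting each later residual so as to ignore $\bfb_q$, rather than naively deleting a column of the triangular factor $\bfR$, which for $q<p$ would destroy its triangular structure and invalidate the simple product formula.
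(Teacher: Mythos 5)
Your proposal is correct and follows essentially the same route as the paper's proof: Cramer's rule reduces $n\lb\bfIhat\inv\rb_{qq}\inv$ to the ratio $|\bfX_n\bfX_n^\top|/|\tilde\bfX_n^{(-q)}(\tilde\bfX_n^{(-q)})^\top|$, and both determinants are evaluated as products of squared Gram--Schmidt residuals, with the key observation (stated in the paper as ``the first $q-1$ steps of the two QR-decompositions coincide'') that deleting $\bfb_q$ leaves the earlier residuals unchanged and replaces the later ones by $r_{ii,q}^2$. Your closing caution about not naively deleting a column of $\bfR$, and your explicit identification $\bfP(p-1,p-1)=\bfP(p-2)$ in the case $q=p-1$, are accurate refinements of the same argument.
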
 
\begin{proof} [Proof of Lemma \ref{lem_formula_inv}]
	Recall the QR-decomposition of $\bfX_n^\top$ given in Section \ref{sec_qr_decomp} and the resulting formula
	\begin{align*}
		| \bfX_n \bfX_n^\top | = \prod\limits_{i=1}^p r_{ii}^2.
	\end{align*}
	Note that the first $(q-1)$ step in the QR-decomposition of the matrices $\tilde{\bfX}_n^\top=(\tilde{\bfX}_n^{(-q)})^\top$ and $\bfX_n^\top$ coincide, which implies
	\begin{align*}
		| \tilde{\bfX}_n \tilde{\bfX}_n^\top | 
		%= \prod\limits_{\substack{i=1, \\ i \neq q}}^p r_{ii,q}^2 
		= \prod_{i=1}^{q-1} r_{ii}^2 \prod_{i=q+1}^p r_{ii,q}^2.
	\end{align*}
 Combining these formulas with Cramer's rule, we conclude
	\begin{align*}
		n \lb \bfIhat\inv \rb_{qq}\inv  =  \frac{| \bfX_n \bfX_n^\top |}{| \tilde{\bfX}_n \tilde{\bfX}_n^\top |} = r_{qq}^2 \prod\limits_{i=q+1}^p \frac{r_{ii}^2}{r_{ii,q}^2}.
	\end{align*}
	
\end{proof}

Recall from the proof of Theorem \ref{thm_prec} (or see Section \ref{sec_qr_decomp} for more details)  that
\begin{align*}
	\lb \bfIhat\inv \rb_{pp}\inv = \frac{1}{n} r_{pp}^2 = \frac{1}{n} \bfb_p^\top \bfP(p-1) \bfb_p,
\end{align*}
while it follows from the fact the entries $x_{ij}$ of the matrix $\bfX_n$ are i.i.d. random variables that 
\begin{align*}
\lb \bfIhat\inv \rb_{qq}\inv  \stackrel{\mathcal{D}}{=} \lb \bfIhat\inv \rb_{pp}\inv, ~ 1\leq q \leq p.
\end{align*} 
%  These quantities can also be written as quadratic forms, but  its concrete structure is unknown so far. 
Thus, these quantities can also be written as quadratic forms.
The next lemma provides such a representation and specifies the dependency structure between two diagonal elements. 
 Moreover, it helps us to understand the dependence structure between two diagonal entries and, thus, is crucial for proving Theorem \ref{thm_joint}.
For convenience, we restrict ourselves to the case $q=p-1$. 
\begin{lemma} \label{lem_rep_inv_quad_form}
It holds
\begin{align*}
		n \lb \bfIhat\inv \rb_{p-1,p-1}\inv = \bfb_{p-1}^\top \lb \bfP(p-2) - \bfQ(p) \rb \bfb_{p-1} ,
\end{align*}
	where $\bfP(p-2) - \bfQ(p)$ is a projection matrix of rank $n-p+1$ and independent of $\bfb_{p-1}$. For a precise definition of $\mathbf{Q}(p)$, we refer the reader to \eqref{def_Q}. 
	
\end{lemma}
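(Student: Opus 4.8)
The plan is to combine the explicit determinant formula of Lemma \ref{lem_formula_inv} with a rank-one update of the projection matrix $\bfP(p-2)$, after which the claim reduces to elementary matrix algebra. First I would recall from Lemma \ref{lem_formula_inv} in the case $q=p-1$ that
\begin{align*}
	n \lb \bfIhat\inv \rb_{p-1,p-1}\inv = \frac{r_{pp}^2 \, r_{p-1,p-1}^2}{\bfb_p^\top \bfP(p-2) \bfb_p},
\end{align*}
and from the QR representation $r_{qq}^2 = \bfb_q^\top \bfP(q-1) \bfb_q$ established in Section \ref{sec_qr_decomp} that $r_{p-1,p-1}^2 = \bfb_{p-1}^\top \bfP(p-2) \bfb_{p-1}$ and $r_{pp}^2 = \bfb_p^\top \bfP(p-1) \bfb_p$. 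Because the entries of $\bfX_n$ have a continuous distribution, the rows $\bfb_1, \ldots, \bfb_p$ are almost surely linearly independent, so the relevant quadratic forms are almost surely positive and every denominator below is well-defined.

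Next, since the span of the first $p-1$ rows is obtained from the span of the first $p-2$ rows by adjoining $\bfb_{p-1}$, the projection onto its orthogonal complement admits the rank-one update (derivable via Gram--Schmidt as in Section \ref{sec_qr_decomp})
\begin{align*}
	\bfP(p-1) = \bfP(p-2) - \frac{\bfP(p-2) \bfb_{p-1} \bfb_{p-1}^\top \bfP(p-2)}{\bfb_{p-1}^\top \bfP(p-2) \bfb_{p-1}}.
\end{align*}
Writing $\bfP = \bfP(p-2)$ and inserting this into $r_{pp}^2$ gives $r_{pp}^2 = \bfb_p^\top \bfP \bfb_p - (\bfb_p^\top \bfP \bfb_{p-1})^2 / (\bfb_{p-1}^\top \bfP \bfb_{p-1})$. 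Multiplying by $r_{p-1,p-1}^2 = \bfb_{p-1}^\top \bfP \bfb_{p-1}$ and dividing by $\bfb_p^\top \bfP \bfb_p$, I would obtain
\begin{align*}
	n \lb \bfIhat\inv \rb_{p-1,p-1}\inv
	= \bfb_{p-1}^\top \bfP \bfb_{p-1} - \frac{\lb \bfb_p^\top \bfP \bfb_{p-1} \rb^2}{\bfb_p^\top \bfP \bfb_p}
	= \bfb_{p-1}^\top \lb \bfP(p-2) - \bfQ(p) \rb \bfb_{p-1},
\end{align*}
where the last equality is just the definition \eqref{def_Q} of $\bfQ(p)$.

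It then remains to identify $\bfP(p-2) - \bfQ(p)$ as a projection of rank $n-p+1$ and to verify independence. Conceptually, $\bfQ(p)$ is the orthogonal projection onto the line spanned by $\bfP(p-2) \bfb_p$, i.e.\ onto the component of $\bfb_p$ orthogonal to $\mathrm{span}(\bfb_1, \ldots, \bfb_{p-2})$; hence $\bfP(p-2) - \bfQ(p)$ projects onto the orthogonal complement of $\mathrm{span}(\bfb_1, \ldots, \bfb_{p-2}, \bfb_p)$, an almost surely $(p-1)$-dimensional subspace, giving rank $n-p+1$. Algebraically, $\bfP^2 = \bfP$ yields $\bfQ(p)^2 = \bfQ(p)$ together with $\bfP \bfQ(p) = \bfQ(p) \bfP = \bfQ(p)$, so that $(\bfP(p-2) - \bfQ(p))^2 = \bfP(p-2) - \bfQ(p)$; symmetry is immediate and $\tr ( \bfP(p-2) - \bfQ(p) ) = (n-p+2) - 1 = n-p+1$. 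Independence follows since both $\bfP(p-2)$ and $\bfQ(p)$ are measurable functions of $\bfb_1, \ldots, \bfb_{p-2}, \bfb_p$ alone, which are independent of $\bfb_{p-1}$.

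The only step carrying any real content is the rank-one update formula for $\bfP(p-1)$; once that is granted from the appendix, everything else is routine scalar manipulation of the three forms $\bfb_p^\top \bfP \bfb_p$, $\bfb_{p-1}^\top \bfP \bfb_{p-1}$ and $\bfb_p^\top \bfP \bfb_{p-1}$. The main thing to watch is the almost-sure nondegeneracy guaranteeing $\bfb_{p-1}^\top \bfP \bfb_{p-1} > 0$ and $\bfb_p^\top \bfP \bfb_p > 0$, which I would record at the outset using the continuity of the entries.
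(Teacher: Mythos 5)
Your proof is correct and follows essentially the same route as the paper's: Lemma \ref{lem_formula_inv} for $q=p-1$, the Gram--Schmidt relation between $\bfP(p-1)$ and $\bfP(p-2)$ (which you state as a rank-one matrix update, while the paper applies it directly to $\bfb_p$ via $\bfP(p-1)\bfb_p = \bfP(p-2)\bfb_p - \proj_{\mathbf{e}_{p-1}}(\bfb_p)$, an equivalent fact), the same algebraic simplification identifying $\bfQ(p)$, and the same idempotency/trace argument for the rank $n-p+1$ and independence from $\bfb_{p-1}$. The differences are purely cosmetic.
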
 
\begin{proof} [Proof of Lemma \ref{lem_rep_inv_quad_form}]
	Recall from Lemma \ref{lem_formula_inv} that
		\begin{align*}
		n \lb \bfIhat\inv \rb_{p-1,p-1}\inv 
		= \frac{r_{pp}^2 r_{p-1,p-1}^2}{\bfb_p^\top \bfP(p-2) \bfb_p}.
	\end{align*}
	Note that $\bfP(p-1) \bfb_p = \bfP(p-2) \bfb_p - \operatorname{proj}_{e_{p-1}} (\bfb_{p} ),$ where the projection of a vector $\mathbf{a}\in\R^n$ onto a vector $\mathbf{e} \in\R^n$ is given by
	\begin{align*}
		\proj_{\mathbf{e}} (\mathbf{a}) = \frac{(\mathbf{e}, \mathbf{a})}{(\mathbf{e}, \mathbf{e})} \mathbf{e}
\end{align*}	  
and (for details, see Section \ref{sec_qr_decomp})
\begin{align*}
		\mathbf{u}_{p-1} =\bfP(p-2) \bfb_{p-1}, ~ \mathbf{e}_{p-1} = \frac{\mathbf{u}_{p-1}}{|| \mathbf{u}_{p-1}||_2}.  
	\end{align*}
	Thus, we obtain
	\begin{align*}
		n \lb \bfIhat\inv \rb_{p-1,p-1}\inv 
		= & \bfb_{p-1}^\top \bfP(p-2) \bfb_{p-1} \lb 1 - \frac{ \bfb_p^\top \proj_{\mathbf{e}_{p-1}} (\bfb_p)}{\bfb_p ^\top \bfP(p-2) \bfb_p } \rb \\
		= & \bfb_{p-1}^\top \bfP(p-2) \bfb_{p-1}  \lb 1 - \bfb_p^\top \frac{(\mathbf{u}_{p-1}, \bfb_p )}{ (\mathbf{u}_{p-1}, \mathbf{u}_{p-1}) \bfb_p ^\top \bfP(p-2) \bfb_p  } \mathbf{u}_{p-1} \rb \\
		= & \bfb_{p-1}^\top \bfP(p-2) \bfb_{p-1}  - \frac{ \bfb_p^\top (\mathbf{u}_{p-1}, \bfb_p )\mathbf{u}_{p-1} }{\bfb_p ^\top \bfP(p-2) \bfb_p } \\
		= & \bfb_{p-1}^\top \bfP(p-2) \bfb_{p-1} - \bfb_{p-1}^\top \mathbf{Q}(p) \bfb_{p-1}.
	\end{align*}
	Note that $\mathbf{Q}(p)^2 = \mathbf{Q}(p)$ and $\bfP(p-2)\mathbf{Q}(p) = \mathbf{Q}(p) \bfP(p-2)= \mathbf{Q}(p)$. Consequently, we obtain
	\begin{align*}
		\lb \bfP(p-2) - \mathbf{Q}(p) \rb ^2
		= & \bfP(p-2)^2 + \mathbf{Q}(p)^2 - \bfP(p-2)\mathbf{Q}(p)  - \mathbf{Q}(p)  \bfP(p-2) \\
		= & \bfP(p-2) + \mathbf{Q}(p) - 2 \mathbf{Q}(p) = \bfP(p-2) - \mathbf{Q}(p).
 	\end{align*}
	This implies that $\bfP(p-2) - \mathbf{Q}(p)$ is a projection matrix independent of $\bfb_{p-1}$ of rank
	\begin{align*}
		\tr \lb \bfP(p-2) - \mathbf{Q}(p)\rb = n - p + 2 - 1 = n - p +1. 
	\end{align*}
\end{proof}

\section{Conclusions} \label{sec_conclusion}

In this paper, we have  provided a multivariate central limit theorem for the diagonal entries of a sample precision matrix if the dimension-to-sample-size ratio satisfies $p/n \to y \in[0,1)$ as $n\to\infty$ and the population covariance matrix is diagonal.
An important direction of future research is 
to find the asymptotic distribution of the  diagonal entries 
for a general structure of the population covariance matrix. 
We emphasize that this question results in a substantially more complicated problem, since 
the method of the proofs used  in this work 
is tailored to the diagonal case. In particular, we reduce the diagonal case $\bfSigma = \operatorname{diag}(\bfSigma)$ to the null case $\bfSigma= \bfI$. For a general distribution and a general population covariance matrix, this step is no longer correct. Then again, if we conduct a QR-decomposition for $(\bfSigma^{1/2} \bfX_n)^\top$ instead of $\bfX_n^\top$
(as in  step 1 of the proof of Theorem \ref{thm_prec}), we obtain a quadratic form where the random vectors depend on the projection matrix in an implicit form. Our proofs, especially the martingale argument for applying a CLT, rely crucially on the fact that the random vector $\bfb_p$ (defined in \eqref{def_bp}) is independent of the random projection matrix $\bfP(p-1)$ (defined in \eqref{def_P}). 
Similarly, the techniques used in \cite{erdoes}, 
which can be used to derive a central limit theorem for a single diagonal entry of the sample precision matrix by a representation as  a difference of two linear spectral statistics
(see Remark \ref{rem21}), require the even stronger assumption $\bfSigma = \bfI$.   
Additionally, it is not straightforward to adapt the tools provided by \cite{baisilverstein2004} due to the different normalizations appearing in the CLT for a single linear spectral statistic and the difference of two. 
The development of novel techniques that meet the challenges of the dependent case $\bfSigma \neq \textnormal{diag} (\bfSigma )$  will be the objective of our future work.

\bigskip

\textbf{Acknowledgements.} 
This work  was partially supported by the  
 DFG Research unit 5381 {\it Mathematical Statistics in the Information Age}, project number 460867398.  The authors would like to thank Giorgio Cipolloni and  L\'{a}szl\'{o} Erd\H{o}s for some helpful  discussions.

	\setlength{\bibsep}{1pt}
\begin{small}
\bibliography{references}
\end{small}

\appendix
\section{Details on the QR-decomposition of $\bfX_n^\top$}	\label{sec_qr_decomp}

In this section, we give more details on the QR-decomposition of the matrix $\bfX_n^\top$ \citep[compare Section 2 in][]{wang2018} and provide an explicit representation of the diagonal elements of $\bfR$ as a quadratic form in the rows of $\bfX_n$. \\ 
	To begin with, we describe the QR-decomposition of a general full-column rank matrix $\mathbf{A} = ( \mathbf{a}_1, \ldots, \mathbf{a}_p)\in \R^{n \times p}$ by applying the Gram-Schmidt procedure to the vectors $\mathbf{a}_1 , \ldots, \mathbf{a}_p$. 
	 Recall the definition of
	 the projection of a vector $\mathbf{a}\in\R^n$ onto a vector $\mathbf{e} \in\R^n, \mathbf{e} \neq \mathbf{0},$ is given by
	\begin{align*}
		\proj_{\mathbf{e}} (\mathbf{a}) = \frac{(\mathbf{e}, \mathbf{a})}{(\mathbf{e}, \mathbf{e})} \mathbf{e}.
\end{align*}	  
	It holds
\begin{align*}
	 {\displaystyle {\begin{aligned}\mathbf {u} _{1}&=\mathbf {a} _{1},&\mathbf {e} _{1}&={\frac {\mathbf {u} _{1}}{\|\mathbf {u} _{1}\|}},\\\mathbf {u} _{2}&=\mathbf {a} _{2}-\operatorname {proj} _{\mathbf {u} _{1}}\mathbf {a} _{2},&\mathbf {e} _{2}&={\frac {\mathbf {u} _{2}}{\|\mathbf {u} _{2}\|}},\\\mathbf {u} _{3}&=\mathbf {a} _{3}-\operatorname {proj} _{\mathbf {u} _{1}}\mathbf {a} _{3}-\operatorname {proj} _{\mathbf {u} _{2}}\mathbf {a} _{3},&\mathbf {e} _{3}&={\frac {\mathbf {u} _{3}}{\|\mathbf {u} _{3}\|}},\\&\;\;\vdots &&\;\;\vdots \\\mathbf {u} _{n}&=\mathbf {a} _{n}-\sum _{j=1}^{n-1}\operatorname {proj} _{\mathbf {u} _{j}}\mathbf {a} _{n},&\mathbf {e} _{n}&={\frac {\mathbf {u} _{n}}{\|\mathbf {u} _{n}\|}}.\end{aligned}}} 
	 \end{align*}
	 Rearranging these equations, we may write $ \mathbf{A} = \mathbf{QR}$,
	 where $\mathbf{Q}=(\mathbf{e}_1, \ldots, \mathbf{e}_p)\in\R^{n\times p}$ denotes a matrix with orthonormal columns satisfying $\mathbf{Q}^\top \mathbf{Q} = \mathbf{I}$ and $\mathbf{R}\in\R^{p\times p}$ is an upper triangular matrix with entries $r_{ij} = (\mathbf{e}_i, \mathbf{a}_j)$ for $i\leq j$ and $r_{ij} = 0$ for $ i > j$, $i,j\in\{1, \ldots, p\}$.

	 In order to ensure formal correctness of the QR decomposition for the matrix $\mathbf{X}_n^\top = (\mathbf{b}_1, \ldots, \mathbf{b}_p) $, we note that the matrix $\mathbf{X}_n^\top $ has full column rank since we assumed that each $x_{ij}$ follows a continuous distribution for $1\leq i \leq p, ~ 1 \leq j \leq n$. 
	 % which is in general not true for discrete entry distributions. 
%	 By a simple replacement trick, we may assume w.l.o.g. full column rank for $\mathbf{X}_n^\top$ (similar to the proof of Proposition 2.1 in \cite{wang2018}). 
	 % wähle für unser Theorem epsilon_n = n^{-(99+2L)n^2}  (glaube das epsilon aus Wang et al reicht sogar, wir nur mir sqrt(n) aufblasen 
	 % Ungleichung für Singulärwerte gelten in unserem Fall auch (in Wang et al. betrachten sie Untermatrizen und ganze Matrix und setzen Singulärwerte davon in Verbindung; nutzen dann das Resultat von Bao et al für p=n)
	   Performing the QR decomposition for the special choice $ \mathbf{A} = \mathbf{X}_n^\top = (\mathbf{b}_1, \ldots, \mathbf{b}_p) $, we get  
	 \begin{align*} 
		\mathbf{X}_n^\top = \mathbf{QR},
		% ~  \mathbf{X}_n = \mathbf{R}^\top \mathbf{Q}^\top,
	\end{align*}	  
	where $\mathbf{Q}=(\mathbf{e}_1, \ldots, \mathbf{e}_p)\in\R^{n\times p}$ denotes a matrix with orthonormal columns satisfying $\mathbf{Q}^\top \mathbf{Q} = \mathbf{I}$ and $\mathbf{R}\in\R^{p\times p}$ is an upper triangular matrix with entries $r_{ij} = (\mathbf{e}_i, \mathbf{b}_j)$ for $i\leq j$ and $r_{ij} = 0$ for $ i > j$, $i,j\in\{1, \ldots, p\}$.
	Using the definitions $r_{qq}^2=(\mathbf{e}_i, \bfb_i)^2$ for $1 \leq q \leq p$ and $\bfP(0) = \mathbf{I}$, we have 
	\begin{align*}
		r_{11}^2  = (\mathbf{e}_1, \bfb_1)^2 = || \bfb_1||_2^2 = \bfb_1^\top \bfP(0) \bfb_1,
	\end{align*}
	and for $2 \leq q \leq p$
	\begin{align} \label{diag_r_quad_form}
		r_{qq}^2 
		= ( \mathbf{e}_{q}, \mathbf{b}_q)^2 
		= \lb \frac{ \mathbf{u}_q^\top \bfb_q}{|| \mathbf{u}_q ||_2 } \rb^2 
		= \lb \frac{\bfb_q^\top \bfP(q-1) \bfb_q }{|| \bfP(q-1) \bfb_q||_2 } \rb^2 
		= \mathbf{b}_q^\top \mathbf{P}(q - 1) \mathbf{b}_q, 
	\end{align}
	where the projection matrix $\bfP(q-1)$ is defined in \eqref{def_P} and satisfies $\bfP(q-1)^2 = \bfP(q-1)$.

	\end{document}